\newtheorem{theorem}{Theorem}[section]
\newtheorem{pro}[theorem]{Proposition}
\newtheorem{lem}[theorem]{Lemma}
\theoremstyle{definition}
\newtheorem{de}[theorem]{Definition}
\theoremstyle{remark}
\newtheorem{re}[theorem]{Remark}
\numberwithin{equation}{section}
\renewcommand{\Re}{\operatorname{Re}}
\renewcommand{\Im}{\operatorname{Im}}
\renewcommand{\MR}[1]{}
\theoremstyle:=theorem,de,re,pro,lem,coro,plain\do{%
     \expandafter\g@addto@macro\csname th@\theoremstyle\endcsname{%
        \addtolength\thm@preskip\parskip
     }%
   }
\begin{document}

\title[Scattering in the non-radial INLS]{A Virial-Morawetz approach to scattering for the non-radial inhomogeneous NLS}

\author[L. Campos]{Luccas Campos}
\address{Department of Mathematics, UFMG, Brazil}
\email{luccasccampos@gmail.com}
\thanks{}

\author[M. Cardoso]{Mykael Cardoso}
\address{Department of Mathematics, UFPI, Brazil}
\email{mykael@ufpi.edu.br}
\thanks{}

\makeatletter{\renewcommand*{\@makefnmark}{}
\footnotetext{\textit{2020 Mathematics Subject classification:} 35Q55, 35P25, 35B40.}\makeatother}
\makeatletter{\renewcommand*{\@makefnmark}{}
\footnotetext{\textit{Keywords:} Nonlinear Schrödinger-type equations, scattering, Morawetz estimates.}\makeatother}
\begin{abstract}
Consider the focusing inhomogeneous nonlinear Schrödinger equation in $H^1(\mathbb{R}^N)$,
		\begin{equation}
			iu_t + \Delta u + |x|^{-b}|u|^{p-1}u=0,\\
		\end{equation}
		when $b > 0$ and $N \geq 3$ in the intercritical case $0 < s_c <1$. In previous works, the second author, as well as Farah, Guzmán and Murphy, applied the concentration-compactness approach to prove scattering below the mass-energy threshold for radial and non-radial data. Recently, the first author adapted the Dodson-Murphy approach for radial data, followed by Murphy, who proved scattering for non-radial solutions in the 3d cubic case, for $b<1/2$. This work generalizes the recent result of Murphy, allowing a broader range of values for the parameters $p$ and $b$, as well as allowing any dimension $N \geq 3$. It also gives a simpler proof for scattering nonradial, avoiding the Kenig-Merle road map. We exploit the decay of the nonlinearity, which, together with Virial-Morawetz-type estimates, allows us to drop the radial assumption.
\end{abstract}

\maketitle

\section{Introduction}

In this work, we consider the Cauchy problem for the focusing inhomogeneous nonlinear Schrö\-din\-ger equation (INLS)

\begin{equation}\label{INLS}
\begin{cases}
iu_t + \Delta u + |x|^{-b}|u|^{p-1}u=0,\\
			u(0) = u_0 \in H^1(\mathbb{R}^N), 
\end{cases}
\end{equation}

where $u: \mathbb{R}^N\times\mathbb{R} \to \mathbb{C}$, $N \geq 3$, $0 < b < 2$, and
\begin{equation}\label{cond_p}
	 1 + \frac{4-2b}{N} < p < 1+\frac{4-2b}{N-2}.
\end{equation}

These equations arise as a model in optics, to accounts for the inhomogeneity of the medium. For a physical point of view, we refer to Gill \cite{Gill}, Liu and Tripathi \cite{Liu}. The INLS case appears naturally as a limiting case of potentials that decay as $|x|^{-b}$ at infinity (Genoud and Stuart \cite{g_8}).


Moreover, this model is invariant under scaling. Indeed, if $u(x,t)$ is a solution to \eqref{INLS}, then
\begin{equation}
	u_\lambda(x,t) = \lambda^\frac{2-b}{p-1}u(\lambda x, \lambda^2 t), \quad \lambda > 0,
\end{equation}
is also a solution. Computing the homogeneous Sobolev norm, we obtain
\begin{equation}
	\|u_\lambda(\cdot,0)\|_{\dot{H}^s} = \lambda^{s-\frac{N}{2}-\frac{2-b}{p-1}}\|u_0\|_{\dot{H}^s}.
\end{equation}

The Sobolev index which leaves the scaling symmetry invariant is called the \textit{critical index} and is defined as
\begin{equation}
s_c = \frac{N}{2} - \frac{2-b}{p-1}.
\end{equation}

Note that the condition \eqref{cond_p} is equivalent to $0 < s_c < 1$.

Solutions to the Cauchy problem \eqref{INLS} conserve mass $M[u]$ and energy $E[u]$, defined by
\begin{equation}\label{def_mass}
M\left[u(t) \right] = \int |u(t)|^2 dx = M[u_0],
\end{equation}
\begin{equation}\label{def_energy}
E\left[u(t) \right] = \frac{1}{2}\int |\nabla u(t)|^2 dx - \frac{1}{p+1} \int |x|^{-b}|u(t)|^{p+1} dx = E[u_0].
\end{equation}

The homogeneous case $b = 0$ is known as the nonlinear Schr\"odinger (NLS) equation, which has been receiving attention over the past decades (see, for instance, the works of Bourgain \cite{Bo99}, Cazenave \cite{cazenave}, Linares-Ponce \cite{LiPo15} and Tao \cite{TaoBook}).

We briefly review the literature about \eqref{INLS}. Genoud and Stuart \cite{g_8} proved that \eqref{INLS} is locally well-posed in $H^1(\mathbb{R}^N)$, $N \geq 1$ for $0 < b < \min\{2,N\}$. For other well-posedness results for this equation, we refer the reader to Guzmán \cite{Boa} and  Dinh \cite{Boa_Dinh}. Farah \cite{Farah_well} proved global well-posedness for the INLS in $H^1(\mathbb{R}^N)$ if 

\begin{equation}\label{ME<1}
    M[u_0]^\frac{1-s_c}{s_c}E[u_0] < M[Q]^\frac{1-s_c}{s_c}E[Q]
\end{equation}
and 
\begin{equation}\label{MK<1}
    \|u_0\|_{L^2}^\frac{1-s_c}{s_c}\|\nabla u_0\|_{L^2} < \|Q\|_{L^2}^\frac{1-s_c}{s_c}\|\nabla Q\|_{L^2},
\end{equation}

where $Q$ is the unique positive radial solution to the elliptic equation 
\begin{equation}\label{def_Q}
	\Delta Q - Q + |x|^{-b}|Q|^{p-1}Q = 0,
\end{equation}

usually referred as the \textit{ground state} associated to \eqref{INLS}.

Scattering in $H^1$ under \eqref{ME<1} and \eqref{MK<1} was initially proved using the concentration-compactness-rigidity approach in the radial setting for for $N \geq 2$ by Farah-Guzmán \cites{FG_Scat_3d, FG_scat}, by imposing some extra restrictions on $p$ and $b$. The first author \cite{Campos_New_2019} generalized the results for the whole intercritical setting in $N \geq 3$, extending the allowed range for $p$ and $b$, by adapting the ideas of Dodson-Murphy \cite{MD_New} to the radial NLS. 

For the non-radial case, the lack of momentum conservation posed a technical difficulty. In the concentration-compactness-rigidity approach, a critical solution is constructed, whose orbit is compact under some symmetries, one of which is the translation parameter $x(t)$. In the homogeneous case ($b=0$), the translation parameter associated to a zero-momentum (critical) solution under \eqref{ME<1} and \eqref{MK<1} which does not scatter satisfies $x(t) = o(t)$. However, for the INLS, this control is not available through momentum arguments. Moreover, the non-radial interaction Morawetz approach by Dodson-Murphy \cite{MD_non_radial} fails, due to the same lack of conservation.

However, it is possible to make use of the spatial decay of the nonlinearity in the INLS equation to extend to the non-radial case the proofs used in the radial case. This was shown in \cite{CFGM} by an adapted profile decomposition which eventually concluded that one could take $x(t) \equiv 0$. As such, non-radial (critical) solutions to the INLS equation behaved similarly, in some sense, to the radial ones. Recently, Murphy \cite{Jason2021} proved that a similar intuition works for the Virial-Morawetz approach, at least in the $3d$ cubic case, when $0 < b < 1/2$. Here, inspired by \cite{Jason2021}, we show how to formalize this intuition in the non-radial case, for any $N \geq 3$, $0< s_c < 1$.

The key to main result is the scattering criterion, which was first proved for the $3d$ cubic NLS equation by Tao \cite{Tao_Scat} (see also \cites{Campos_New_2019, AndyScat,Jason2021}). 
\begin{theorem}[Scattering criterion]\label{scattering_criterion}
Let  $N \geq 3$, $1+\frac{4-2b}{N} < p < 1+\frac{4-2b}{N-2}$ and $0 < b < \min\{N/2,2\}$. Consider an $H^1(\mathbb{R}^N)$-solution $u$ to \eqref{INLS} defined on $[0,+\infty)$ and assume the a priori bound 
\begin{equation}\label{E}
\displaystyle\sup_{t \in [0,+\infty)}\left\|u(t)\right\|_{H^1_x} := E < +\infty.
\end{equation}

There exist constants $R > 0$ and $\epsilon>0$ depending only on $E$, $N$, $p$ and $b$ (but never on $u$ or $t$) such that if
\begin{equation}\label{scacri}
\liminf_{t \rightarrow +\infty}\int_{B(0,R)}|u(x,t)|^2 \, dx \leq \epsilon^2,
\end{equation}
then there exists a function $u_+ \in H^1(\mathbb{R}^N)$ such that 
$$
\lim_{t \rightarrow +\infty}\left\|u(t)-e^{it\Delta}u_+\right\|_{H^1(\mathbb{R}^N)} = 0,
$$ 
i.e., $u$ scatters forward in time in $H^1(\mathbb{R}^N)$.
\end{theorem}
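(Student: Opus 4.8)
The plan is to establish the criterion by the Tao--Dodson--Murphy method: rather than controlling $u$ directly, I will produce a single late time $T$ at which the linear evolution of $u(T)$ has small scattering norm on $[T,\infty)$, and then close by the perturbative theory. Concretely, by the local theory and stability for \eqref{INLS} together with the Strichartz estimates, $u$ scatters forward in $H^1$ as soon as a critical scattering norm $\|u\|_{S([0,\infty))}$ (an $L^q_tL^r_x$-type norm with $\dot H^{s_c}$ scaling, upgraded to $H^1$ by persistence of regularity) is finite. Using the a priori bound $E$ and a continuity argument, this reduces to exhibiting one time $T$ with $\|e^{i(t-T)\Delta}u(T)\|_{S([T,\infty))}\le\delta$, where $\delta=\delta(E,N,p,b)>0$ is the threshold below which the small-data bootstrap applies. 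The constants $R$ and $\epsilon$ will be fixed at the end of the argument in terms of $\delta$.

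By hypothesis there is a sequence $t_n\to+\infty$ with $\|u(t_n)\|_{L^2(B(0,R))}\le\sqrt2\,\epsilon$. Fix a time lag $\mathcal T>0$ to be chosen and take $n$ large so that $t_n>\mathcal T$. Duhamel's formula on $[t_n-\mathcal T,t_n]$ gives
$$
e^{i(t-t_n)\Delta}u(t_n)=e^{i(t-t_n+\mathcal T)\Delta}u(t_n-\mathcal T)+i\int_{t_n-\mathcal T}^{t_n}e^{i(t-s)\Delta}\,|x|^{-b}|u|^{p-1}u(s)\,ds,
$$
and I estimate the scattering norm on $[t_n,\infty)$ of each piece. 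For the first, \emph{dispersed}, term the elapsed time always exceeds $\mathcal T$, so interpolating the dispersive decay $\|e^{it\Delta}\|_{L^1\to L^\infty}\lesssim|t|^{-N/2}$ against the uniform bound $\|u(t_n-\mathcal T)\|_{H^1}\le E$ yields a scattering-norm bound $\lesssim_E\mathcal T^{-\theta}$ for some $\theta=\theta(N,p,b)>0$; the intercriticality $0<s_c<1$ guarantees the exponents make the $\tau\ge\mathcal T$ integration summable. Choosing $\mathcal T$ large makes this $\le\delta/2$.

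The \textbf{main step} is the nonlinear interaction term. By the dual Strichartz estimate its scattering norm is controlled by $\big\||x|^{-b}|u|^{p-1}u\big\|_{N([t_n-\mathcal T,t_n])}$, where $N$ is the dual space over the short window. I split the spatial integral into $\{|x|>R\}$ and $\{|x|\le R\}$. In the exterior the weight obeys $|x|^{-b}\le R^{-b}$, so together with $\|u\|_{H^1}\le E$ and Sobolev embedding this contributes $\lesssim_E R^{-b}\mathcal T^{\alpha}$, small once $R$ is large relative to the now-fixed $\mathcal T$; here the slow decay of $|x|^{-b}$ at infinity is precisely what replaces radial symmetry. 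In the interior I use the smallness hypothesis: interpolating $\|u\|_{L^2(B_R)}\lesssim\epsilon$ against $\|u\|_{H^1}\le E$ through H\"older and Gagliardo--Nirenberg, and using that $|x|^{-b}\in L^\sigma_{loc}$ for a suitable $\sigma$ (the assumption $0<b<N/2$ ensures local integrability against the relevant powers), bounds the interior contribution by $\lesssim_E\epsilon^{\beta}\mathcal T^{\alpha}$ for some $\beta>0$. One subtlety is that the hypothesis controls the localized mass only at the times $t_n$, whereas this estimate needs control over the whole window $[t_n-\mathcal T,t_n]$; this is handled by a local mass-monotonicity bound, obtained by differentiating $\int\chi_R|u|^2$ in time, which shows the localized mass in a mildly enlarged ball varies by only $O(\mathcal T/R)$-type corrections, so smallness at $t_n$ propagates across the window at the cost of slightly enlarging $R$. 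Choosing $\epsilon$ small and $R$ large makes this piece $\le\delta/2$.

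Combining the two estimates gives $\|e^{i(t-t_n)\Delta}u(t_n)\|_{S([t_n,\infty))}\le\delta$; taking $T=t_n$ and invoking the reduction of the first paragraph yields forward scattering in $H^1$. I expect the \textbf{principal obstacle} to be the nonlinear interaction estimate of the third paragraph: the singular weight $|x|^{-b}$ near the origin and its slow decay at infinity must be balanced simultaneously against the Strichartz admissibility constraints for \emph{every} $N\ge3$ and the full range $0<s_c<1$, and one must convert purely $L^2$-in-a-ball smallness into smallness of a nonlinear dual-Strichartz norm, which is what forces both the careful H\"older--Gagliardo--Nirenberg interpolation and the local-mass-propagation lemma.
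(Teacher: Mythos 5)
Your ``recent past'' step is sound and essentially matches the paper's Step 1: smallness of localized mass at $t_n$, propagated across the window via $\partial_t\int\eta_R|u|^2=2\Im\int\nabla\eta_R\cdot\nabla u\,\bar u=O(1/R)$, followed by an interior/exterior split in which the exterior is handled by the decay $|x|^{-b}\lesssim R^{-b}$ (the paper uses $\||x|^{-b}\|_{L^{r_1}(|x|>R/2)}\lesssim R^{N/r_1-b}$ with $br_1>N$) — this is exactly how the radiality assumption is dropped. The genuine gap is in the term you flag as routine: the ``dispersed'' piece $e^{i(t-t_n+\mathcal T)\Delta}u(t_n-\mathcal T)$. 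You claim a bound $\lesssim_E\mathcal T^{-\theta}$ by ``interpolating the dispersive decay against the uniform bound $\|u(t_n-\mathcal T)\|_{H^1}\le E$,'' but the dispersive estimate \eqref{decay} requires the datum in $L^{p'}_x$ with $p'<2$, and an $H^1$ bound provides no such control: $H^1$ only embeds into $L^q$ for $2\le q\le 2N/(N-2)$. Indeed no uniform rate can exist: taking $\psi=e^{-i(\mathcal T+1)\Delta}\phi$ with $\|\phi\|_{H^1}\le E$, the free evolution $e^{i\tau\Delta}\psi$ equals $\phi$ at elapsed time $\tau=\mathcal T+1$, so its scattering norm over elapsed times $\ge\mathcal T$ is bounded below independently of $\mathcal T$. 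Since your $u(t_n-\mathcal T)$ varies with $n$ and $\mathcal T$ and you use nothing beyond \eqref{E}, this step fails, and it is in fact the central difficulty of the whole criterion, not a routine one.

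The paper avoids this by never cutting Duhamel at $t_n-\mathcal T$: it writes $e^{i(t-T)\Delta}u(T)=e^{it\Delta}u_0+iF_1+iF_2$ with Duhamel taken all the way back to $t=0$. The free piece $e^{it\Delta}u_0$ is made small on $[T_0,\infty)$ by monotone convergence of the finite global Strichartz norm of the \emph{single fixed} datum $u_0$ (the resulting $T_0$ may depend on $u$; only $\epsilon$ and $R$ must be uniform, which the theorem permits). The distant past $F_2$ is rewritten via Duhamel as $e^{it\Delta}\bigl[e^{i(-T+\epsilon^{-\alpha})\Delta}u(T-\epsilon^{-\alpha})-u_0\bigr]$ and estimated by interpolating an $L^2$-admissible Strichartz norm (bounded by the conserved mass, not small) against a norm $L^{1/\delta}_tL^{2N/(N-2-4\delta)}_x$ that decays because the dispersive estimate is applied to the \emph{nonlinearity}: by \eqref{L1}, $\||x|^{-b}|u|^{p-1}u\|_{L^\infty_tL^r_x}\lesssim E^p$ for some $r<2$ (here $b>0$ and Sobolev are essential), so integrating $|t-s|^{-(1+2\delta)}$ over $s\in[0,T-\epsilon^{-\alpha}]$ with elapsed time $\ge\epsilon^{-\alpha}$ gives the rate $\epsilon^{\alpha\delta}$, hence $\|F_2\|\lesssim\epsilon^{\alpha\delta s_c}$. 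In short: the smallness of the distant past comes from the decay of the nonlinear term in a below-$L^2$ Lebesgue space, not from any decay of the free evolution of an $H^1$-bounded snapshot, and your proposal needs this mechanism (or an equivalent) to close.
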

\begin{re}
The criterion above was proved for radial solutions in \cite{Campos_New_2019}, and relied heavily on the so-called Strauss Lemma, which ensures spatial localization of radial $H^1$ functions. Here, we drop the radiality assumption, showing that the exact same criterion applies to non-radial solutions as well. This shows that the decay of the nonlinearity implies in some kind of localization for solutions under the thresholds given by the ground state.
\end{re}

The localization effect caused by the decay of the nonlinearity can be expressed as the following proposition, which we show to hold for non-radial solutions.
\begin{pro}[Virial-Morawetz estimate]\label{virial}
For $N \geq 3$, $1+\frac{4-2b}{N} < p < 1+\frac{4-2b}{N-2}$ and $0 < b < \min\{N/2,2\}$, let $u$ be a $H^1(\mathbb{R}^N)$-solution to \eqref{INLS} satisfying \eqref{ME<1} and \eqref{MK<1}. Then, there exists $R >0$ such that, for any $T>0$,
$$
\frac{1}{T}\int_0^T\int_{|x|\leq R}|x|^{-b}|u(x,t)|^{p+1}\,dx\, dt \lesssim_{u,\delta} \frac{R}{T}+\frac{1}{R^b}.
$$
\end{pro}

The scattering criterion and the virial-Morawetz estimates allow us to prove the following theorem.
\begin{theorem}\label{teo1} Let $N \geq 3$,  $1+\frac{4-2b}{N} < p < 1+\frac{4-2b}{N-2}$, $0 < b < \min\{N/2,2\}$, and $u_0 \in H^1(\mathbb{R}^N)$ be such that 
$$M[u_0]^\frac{1-s_c}{s_c}E[u_0] < M[Q]^\frac{1-s_c}{s_c}E[Q]$$ and $$\|u_0\|_{L^2}^\frac{1-s_c}{s_c}\|\nabla u_0\|_{L^2} < \|Q\|_{L^2}^\frac{1-s_c}{s_c}\|\nabla Q\|_{L^2}.$$ 
Then the solution $u(t)$ to \eqref{INLS} exists globally in time and scatters in $H^1$ in both time directions.
\end{theorem}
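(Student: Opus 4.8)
The plan is to reduce Theorem~\ref{teo1} to the scattering criterion (Theorem~\ref{scattering_criterion}) by using the Virial--Morawetz estimate (Proposition~\ref{virial}) to produce a sequence of times along which the solution deconcentrates in $L^2$, thereby verifying hypothesis \eqref{scacri}. I would carry this out in three stages, and then handle the backward direction by symmetry.

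First, I would establish global existence together with the uniform a priori bound \eqref{E}. Under \eqref{ME<1}--\eqref{MK<1}, Farah's global well-posedness result applies, so $u$ is defined on all of $\mathbb{R}$. The variational characterization of the ground state $Q$ solving \eqref{def_Q} then shows that the two threshold conditions are preserved by the flow: combining conservation of mass and energy with the sharp Gagliardo--Nirenberg inequality, one sees that $\|u(t)\|_{L^2}^{(1-s_c)/s_c}\|\nabla u(t)\|_{L^2}$ stays strictly below the ground-state quantity for all $t$, with a \emph{uniform} gap $\delta>0$. Together with mass conservation this yields $\sup_t\|u(t)\|_{H^1_x}=E<\infty$, and simultaneously supplies the coercivity that licenses the use of Proposition~\ref{virial}.

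Second, I would extract a deconcentrating sequence. Fix the radius $R_0$ and threshold $\epsilon$ furnished by Theorem~\ref{scattering_criterion}, which depend only on $E,N,p,b$. Applying Proposition~\ref{virial} on a ball of radius $R\ge R_0$ and letting $T\to\infty$ gives
\[
\liminf_{t\to\infty}\int_{|x|\le R}|x|^{-b}|u(x,t)|^{p+1}\,dx \;\lesssim_{u}\; \frac{1}{R^{b}},
\]
so there is a sequence $t_n\to\infty$ along which this weighted integral is of size $O(R^{-b})$. On the smaller ball $B(0,R_0)$ the weight is bounded below, $|x|^{-b}\ge R_0^{-b}$, whence
\[
\int_{B(0,R_0)}|u(t_n)|^{p+1}\,dx \;\le\; R_0^{b}\int_{|x|\le R}|x|^{-b}|u(t_n)|^{p+1}\,dx \;\lesssim\; R_0^{b}R^{-b}.
\]
Since $p+1>2$, Hölder's inequality on the fixed ball converts this into control of the $L^2$ mass,
\[
\int_{B(0,R_0)}|u(t_n)|^2\,dx \;\lesssim\; R_0^{N\left(1-\frac{2}{p+1}\right)}\Big(R_0^{b}R^{-b}\Big)^{\frac{2}{p+1}},
\]
which can be made $\le\epsilon^2$ by choosing $R$ sufficiently large (depending only on $R_0,\epsilon,N,p,b$). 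This verifies \eqref{scacri} along $t_n$, hence $\liminf_{t\to\infty}\int_{B(0,R_0)}|u|^2\,dx\le\epsilon^2$.

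Finally, Theorem~\ref{scattering_criterion} yields forward-in-time scattering in $H^1$, and for the backward direction I would invoke the time-reversal symmetry $u(x,t)\mapsto\overline{u(x,-t)}$, which sends solutions of \eqref{INLS} to solutions and preserves \eqref{ME<1}--\eqref{MK<1}, so the forward argument applies verbatim. The main obstacle lies in Stage~1: ensuring that, \emph{without radial symmetry}, one still obtains a uniform-in-time gap below the ground-state threshold and the associated coercivity needed for Proposition~\ref{virial}. Once that is secured, the passage from the weighted $L^{p+1}$ smallness produced by the Morawetz estimate to the $L^2$ smallness demanded by the scattering criterion is elementary, and is exactly the point at which the spatial decay $|x|^{-b}$ of the nonlinearity—rather than radiality—does the work.
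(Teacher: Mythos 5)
Your proposal is correct and follows essentially the same route as the paper: energy trapping gives global existence with a uniform $H^1$ bound, the Virial--Morawetz estimate of Proposition \ref{virial} evacuates the weighted $L^{p+1}$ norm along a sequence of times, and H\"older's inequality converts this into the $L^2$ smallness on a fixed ball required by the scattering criterion (Theorem \ref{scattering_criterion}), with time reversal handling the backward direction. The only cosmetic differences are that you fix a large radius $R$ and extract a $\liminf$ bound of size $O(R^{-b})$, whereas the paper sends $R_n = T_n^{1/(1+b)}\to\infty$ to drive the weighted integral to zero (Proposition \ref{energy_evacuation}); also, your Stage-1 concern is unfounded, since the energy-trapping and coercivity lemmas are purely variational (sharp Gagliardo--Nirenberg plus conservation laws) and never use radiality.
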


\begin{re}
The proofs in \cites{FG_Scat_3d, FG_scat, MMZ, CFGM} use the so-called concentration-com\-pact\-ness-rigidity approach, pionereed by Kenig and Merle \cite{KM_Glob} in the context of the energy-critical ($s_c = 1$) NLS equation. More recently, Dodson and Murphy \cite{MD_New} developed a new approach, based on Tao's scattering criterion in \cite{Tao_Scat} and on \textit{Virial-Morawetz estimates}. This approach was adapted to the INLS by \cite{Campos_New_2019}, in the radial case, and by Murphy \cite{Jason2021} in the $3d$ cubic, non-radial case.  We develop here a modification of the approach in \cite{Jason2021}, closer to the one chosen in \cite{Campos_New_2019}, replacing $L_t^4 W^{1,3}_x$ estimates by smoother Strichartz estimates which, together with small data theory, make it possible to handle the inhomogeneity better, allowing for an optimal range of parameters in dimensions $N\geq 3$. The radial assumption is droped vis-à-vis the $|x|^{-b}$ factor in the nonlinear term. In lower dimensions, this approach fails due to the slow decay on time of the Schr\"odinger operator $e^{it\Delta}$ and the slow decay in the Virial-Morawetz estimate due to the weaker non-radial decay.
\end{re}

This paper is organized as follows: in the next section, we introduce some notation and basic estimates. In Section $3$, we prove the scattering criterion (Theorem \ref{scattering_criterion}). In Section $4$, we apply this criterion, together with Morawetz/Virial estimates to prove Theorem \ref{teo1}.

\section{Notation and basic estimates}

We denote by $p'$ the Holder's conjugate of $p \geq 1$. We use $X \lesssim Y$  to denote $X \leq C Y$, where the constant $C$ only depends on the parameters (such as $N$, $p$, $b$, as well as $E$ in \eqref{E}) and exponents, but never on $u$ or on $t$. The notations $a^+$ and $a^-$ denote, respectively, $a+\eta$ and $a-\eta$, for a fixed $0 < \eta \ll 1$. We use $p^*$ to denote the critical exponent of the Sobolev embedding $H^1 \hookrightarrow L^{p^*}$, that is, $p^* = 2N/(N-2)$, for $N \geq 3$.

\begin{de}\label{Hs_adm} If $N \geq 1$ and $s \in (-1,1)$, the pair $(q,r)$ is called $\dot{H}^s$\textit{-admissible} if it satisfies the condition
\begin{equation}\label{hs_adm_eq}
\frac{2}{q} = \frac{N}{2}-\frac{N}{r}-s,    
\end{equation}
where
$$
2 \leq q,r \leq \infty, \text{ and } (q,r,N) \neq (2,\infty,2).
$$
In particular, if $s=0$, we say that the pair is $L^2$-admissible.
\end{de}

\begin{de}\label{As}
Given $N> 2$, consider the set
\begin{equation}
    \mathcal{A}_0 = \left\{(q,r)\text{ is } L^2\text{-admissible} \left|
    \, 2 \leq r \leq \frac{2N}{N-2}
    \right.
    \right\}.
\end{equation}
For $N>2$ and $s \in (0,1)$, consider also
\begin{equation}
    \mathcal{A}_s = \left\{(q,r)\text{ is } \dot{H}^{s}\text{-admissible} \left|\,
    \left(\frac{2N}{N-2s}\right)^+ \leq r \leq \left(\frac{2N}{N-2}\right)^-
    \right.
    \right\}
\end{equation}
and
\begin{equation}
    \mathcal{A}_{-s} = \left\{(q,r) \text{ is } \dot{H}^{-s}\text{-admissible} \left|
    \left(\frac{2N}{N-2s}\right)^+ \leq r \leq \left(\frac{2N}{N-2}\right)^-
    \right.\right\}.
\end{equation}
We define the following Strichartz norm
\begin{equation}
	\|u\|_{S(\dot{H}^s,I)} = \sup_{(q,r)\in \mathcal{A}_s}\|u\|_{L_I^qL_x^r},	
\end{equation}

and the dual Strichartz norm
\begin{equation}
	\|u\|_{S'(\dot{H}^{-s},I)} = \inf_{(q,r)\in \mathcal{A}_{-s}}\|u\|_{L_I^{q'}L_x^{r'}}.	
\end{equation}
If $s=0$, we shall write $S(\dot{H}^0,I) = S(L^2,I)$ and $S'(\dot{H}^0,I) = S'(L^2,I)$. If $I=\mathbb{R}$, we will omit $I$.
\end{de}


\subsection{Strichartz Estimates }

In this work, we use the following versions of the Strichartz estimates:

\begin{itemize}
\item[(i)] \textbf{The standard Strichartz estimates} (Cazenave \cite{cazenave}, Keel and Tao \cite{KT98}, Foschi \cite{Foschi05})

\begin{equation}\label{S1}
\|e^{it\Delta}f\|_{S(L^2)} \lesssim\|f\|_{L^2},
\end{equation}

\begin{equation}\label{S2}
\|e^{it\Delta}f\|_{S(\dot{H}^s)} \lesssim\|f\|_{\dot{H}^s},
\end{equation}
\noeqref{S2}
\begin{equation}\label{KS1}
\left\|\int_\mathbb{R}e^{i(t-\tau)\Delta}g(\cdot,\tau) \, d\tau\right\|_{S(L^2,I)} + \left\|\int_0^te^{i(t-\tau)\Delta}g(\cdot,\tau) \, d\tau\right\|_{S(L^2,I)}\lesssim\|g\|_{S'(L^2,I)}.
\end{equation}
\noeqref{KS1}
\item[(ii)] \textbf{The Kato-Strichartz estimate} (Kato \cite{Kato94}, Foschi \cite{Foschi05})
\begin{equation}\label{KS2}
\left\|\int_\mathbb{R}e^{i(t-\tau)\Delta}g(\cdot,\tau) \, d\tau\right\|_{S(\dot{H}^s,I)} + \left\|\int_0^te^{i(t-\tau)\Delta}g(\cdot,\tau) \, d\tau\right\|_{S(\dot{H}^s,I)}\lesssim\|g\|_{S'(\dot{H}^{-s},I)}.
\end{equation}
\item[(iii)] \textbf{Local-in-time estimate}

\begin{equation}\label{lit}
\left\|\int_a^be^{i(t-\tau)\Delta}g(\cdot,\tau) \, d\tau\right\|_{S(\dot{H}^{s},\mathbb{R})}\lesssim\|g\|_{S(\dot{H}^{-s}, [a,b])}.
\end{equation}
\end{itemize}
These relations are obtained from the decay of the linear operator (see, for instance, Linares and Ponce \cite[Lemma 4.1]{LiPo15})
\begin{equation}\label{decay}
    \|e^{it\Delta}f\|_{L^p_x} \lesssim \frac{1}{|t|^{\frac{N}{2} \left(\frac{1}{p'} - \frac{1}{p}\right)}} \|f\|_{L^{p'}_x}, \quad p \geq 2, 
\end{equation}
combined with Sobolev inequalities and interpolation. The inequalities \eqref{S1}-\eqref{KS2} are standard in the theory \cite{cazenave}. The inequality \eqref{lit} follows from \eqref{KS2} by noting that
\begin{equation}
    \int_a^be^{i(t-\tau)\Delta}g(\tau) \, d\tau = \int_{\mathbb{R}}e^{i(t-\tau)\Delta}\mathbbm{1}_{[a,b]}(\tau)g(\tau) \, d\tau.
\end{equation}

\subsection{Other useful estimates}




In what follows we also use the following standard estimates.

\begin{lem}
[{See \cite[Section 2]{Campos_New_2019} and \cite[Section 4]{Boa}}]
\label{lem_guz} 
Let $N \geq 3$, $u, v \in C^\infty_0(\mathbb{R}^{N+1})$, $1+\frac{4-2b}{N} < p < 1+\frac{4-2b}{N-2}$ and $0 < b <\min\{N/2,2\}$. Then there exists $0 \leq \theta = \theta(N,p,b) \ll p-1$ such that the following inequalities hold
\begin{align}
\label{dual_s}\left\||x|^{-b}|u|^{p-1}v\right\|_{S'(\dot{H}^{-s_c},I)}&\lesssim\left\|u\right\|^\theta_{L^\infty_tH^1_x}\left\|u\right\|^{p-1-\theta}_{S(\dot{H}^{s_c},I)}\left\|v\right\|_{S(\dot{H}^{s_c},I)},\\
\noeqref{dual_l}\label{dual_l}\left\||x|^{-b}|u|^{p-1}u\right\|_{S'\left(L^2, I\right)}&\lesssim\left\|u\right\|^\theta_{L^\infty_tH^1_x}\left\|u\right\|^{p-1-\theta}_{S\left(\dot{H}^{s_c}, I\right)}\left\|u\right\|_{S\left(L^2, I\right)},\\
\label{dual_grad_l}\left\|\nabla\left(|x|^{-b}|u|^{p-1}u\right)\right\|_{S'\left(L^2, I\right)}
&\lesssim\left\|u\right\|^\theta_{L^\infty_tH^1_x}\left\|u\right\|^{p-1-\theta}_{S
\left(\dot{H}^{s_c}, I\right)}\left\|\nabla u\right\|_{S\left(L^2, I\right)}^{},\\
\label{L1}\||x|^{-b}|u|^{p-1}u\|_{L^\infty_IL^{r}_x} &\lesssim \|u\|_{L^\infty_I H^1_x}^p\
\end{align}
for $\frac{2(N-b)}{N+4-2b} < r < \frac{2(N-b)}{N+2-2b}.$
\end{lem}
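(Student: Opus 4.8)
The plan is to prove all four estimates by the standard device of splitting physical space into the unit ball $B = B(0,1)$, where the weight $|x|^{-b}$ is singular but locally integrable, and its complement $B^c$, where $|x|^{-b} \leq 1$ is harmless. On each region I reduce the dual Strichartz norm on the left-hand side to a purely spatial H\"older estimate at fixed time, followed by a H\"older inequality in time that matches the temporal exponents dictated by the admissibility sets $\mathcal{A}_{\pm s_c}$ and $\mathcal{A}_0$. For \eqref{dual_s} I select a dual pair $(q',r')$ with $(q,r) \in \mathcal{A}_{-s_c}$, while for \eqref{dual_l} and \eqref{dual_grad_l} I select an $L^2$-admissible dual pair in $\mathcal{A}_0$; then I bound the spatial norm by the $\dot H^{s_c}$-norms coming from pairs in $\mathcal{A}_{s_c}$, which after summation over the two regions produce the right-hand sides.

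At fixed time the core computation is a single H\"older inequality distributing the product $|x|^{-b}|u|^{p-1}v$ across three factors: the weight $|x|^{-b}$ placed in some $L^\gamma$; a small power $|u|^\theta$ placed in a high Lebesgue exponent controlled by the Sobolev embedding $H^1 \hookrightarrow L^{p^*}$; and the remaining $|u|^{p-1-\theta}$ together with $v$ placed in the Lebesgue exponents $r$ arising from admissible pairs. The role of the parameter $\theta$ is precisely to create room in the exponent budget: on $B$ local integrability of $|x|^{-b}$ forces $\gamma < N/b$, and peeling off the power $\theta$ of $u$ into $L^{p^*}$ via the a priori $H^1$-control buys exactly the margin needed to keep all pairs admissible. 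On $B^c$ the weight is bounded and $\gamma$ may instead be chosen to secure decay at infinity. The twin conditions $1 + \frac{4-2b}{N} < p < 1 + \frac{4-2b}{N-2}$ (equivalently $0 < s_c < 1$) and $0 < b < \min\{N/2,2\}$ are exactly what make this bookkeeping consistent, guaranteeing that the H\"older relations, the Sobolev embeddings, and the admissibility constraints hold simultaneously for some $0 \leq \theta \ll p-1$.

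The estimate \eqref{L1} is the simplest instance of this scheme: it is a fixed-time spatial H\"older inequality combined with $H^1 \hookrightarrow L^{p^*}$ and the local integrability of $|x|^{-b}$ on $B$ together with its boundedness on $B^c$, and it requires no time integration at all. I would dispatch it first, since it also isolates the exponent window $\frac{2(N-b)}{N+4-2b} < r < \frac{2(N-b)}{N+2-2b}$ that reappears throughout.

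I expect the main obstacle to be the gradient estimate \eqref{dual_grad_l}. Writing $\nabla\bigl(|x|^{-b}|u|^{p-1}u\bigr) = -b\,|x|^{-b-1}\tfrac{x}{|x|}|u|^{p-1}u + |x|^{-b}\nabla\bigl(|u|^{p-1}u\bigr)$, the second term is handled exactly as in \eqref{dual_l} after the pointwise bound $|\nabla(|u|^{p-1}u)| \lesssim |u|^{p-1}|\nabla u|$, with $\nabla u$ playing the role of $v$. The first term, however, carries the strictly more singular weight $|x|^{-b-1}$, and naively demanding $|x|^{-b-1} \in L^\gamma(B)$ would require $(b+1)\gamma < N$, which the H\"older budget may not afford for larger $b$. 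The clean resolution is to absorb one of the excess powers of $|x|^{-1}$ using a Hardy-type inequality $\||x|^{-1}f\|_{L^2} \lesssim \|\nabla f\|_{L^2}$ (valid for $N \geq 3$), trading the extra singularity for a gradient and reducing the first term to the already-treated weight $|x|^{-b}$ acting on $\nabla u$; alternatively one exploits the slack provided by $\theta$ directly. Either way, this is where the hypothesis $b < \min\{N/2,2\}$ is genuinely needed, since it keeps the Hardy-improved weight inside the admissible integrability window, and it is the step I would expect to be the most delicate to verify.
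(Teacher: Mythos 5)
Your plan coincides with the proof in the references the paper itself cites for this lemma (\cite{Boa}, extended to the full range in \cite{Campos_New_2019}): splitting into $B(0,1)$ and its complement, fixed-time H\"older placing $|x|^{-b}$ in $L^\gamma$ with $\gamma b<N$ on the ball, the small power $|u|^\theta$ absorbed through $H^1\hookrightarrow L^{p^*}$ (whence the $L^\infty_t H^1_x$ factor), H\"older in time against the admissibility exponents, and, for the singular piece $|x|^{-b-1}|u|^{p-1}u$ in \eqref{dual_grad_l}, trading one power of $|x|^{-1}$ for a derivative --- precisely the step that allows passage from Guzm\'an's restricted range ($b<N/3$ when $N=3$) to the full $0<b<\min\{N/2,2\}$. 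Two small corrections: the $L^2$ Hardy inequality you quote is not sufficient as stated, since it pins the $\nabla u$ factor to the admissible pair $(\infty,2)$ and the spatial exponent budget then fails once $b\geq 1$ --- you need the $L^r$ version $\left\||x|^{-1}f\right\|_{L^r}\lesssim\|\nabla f\|_{L^r}$ for $1<r<N$, equivalently the embedding $\dot W^{1,r}\hookrightarrow L^{Nr/(N-r)}$ applied to one factor of $u$ --- and your fallback of ``exploiting the slack provided by $\theta$'' cannot work, because placing $|u|^\theta$ in a high Lebesgue exponent frees only $O(\theta)$ of the H\"older budget while the extra singularity costs a full unit.
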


\begin{re}\label{re_guz}
Inequalities \eqref{dual_s}-\eqref{dual_grad_l} were proved in \cite{Boa} for $0 < b < b^*$ ($b^*=\frac{N}{3}$, if $N=1,2,3$ and $b^*=2$, if $N\geq 4$) and with the additional restriction $p < 4-2b$ instead of $p < 5-2b$ in the 3d case. The proof was extended to the full range in \cite{Campos_New_2019}.
\end{re}
The next lemma was proved in \cite{Boa} with the same restrictions mentioned in Remark \ref{re_guz}. In view of the results in \cite{Campos_New_2019}, the proof in \cite{Boa} immediately extended to the new range of $p$ and $b$.
\begin{lem}[{Small data theory, see \cite{Campos_New_2019} and \cite{Boa}}]\label{small_data} Let $N \geq 3$, $1+\frac{4-2b}{N} < p < 1+\frac{4-2b}{N-2}$ and $0 < b <\min\{N/2,2\}$. Suppose $\|u_0\|_{H^1} \leq E$. Then there exists $\delta_{sd} = \delta_{sd}(E) > 0$ such that if 
\begin{equation}
	\|e^{it\Delta}u_0\|_{S(\dot{H}^{s_c},[0,+\infty))} \leq \delta_{sd},
\end{equation}

then the solution $u$ to \eqref{INLS} with initial condition $u_0 \in H^1(\mathbb{R}^N)$ is globally defined on $[0,+\infty)$. Moreover,
\begin{equation}
		\|u\|_{S(\dot{H}^{s_c},[0,+\infty))} \leq 2 	\|e^{it\Delta}u_0\|_{S(\dot{H}^{s_c},[0,+\infty))},
\end{equation}
and
\begin{equation}
	\|u\|_{S(L^2, [0,+\infty))}+\|\nabla u\|_{S(L^2, [0,+\infty))} \lesssim \|u_0\|_{H^1}.
\end{equation}
Furthermore, $u$ scatters forward in time in $H^1$, i.e., here exists $u_+ \in H^1$ such that
\begin{equation}
	\lim_{t\to+\infty} \|u(t)-e^{it\Delta}u_+\|_{H^1_x} = 0.
\end{equation}
\end{lem}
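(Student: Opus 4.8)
The plan is to construct the solution by a standard contraction-mapping argument built on the Duhamel representation
\begin{equation*}
\Phi(u)(t) = e^{it\Delta}u_0 + i\int_0^t e^{i(t-\tau)\Delta}\,|x|^{-b}|u|^{p-1}u(\tau)\,d\tau,
\end{equation*}
and to run the fixed point on the complete metric space
\begin{equation*}
B = \left\{u : \|u\|_{S(\dot{H}^{s_c})} \leq 2\rho,\ \|u\|_{S(L^2)} + \|\nabla u\|_{S(L^2)} \leq 2CE\right\}, \qquad \rho := \|e^{it\Delta}u_0\|_{S(\dot{H}^{s_c},[0,+\infty))}\le \delta_{sd},
\end{equation*}
where $C$ is the implicit Strichartz constant, equipped with the metric $d(u,v) = \|u-v\|_{S(\dot{H}^{s_c})}$. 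The key structural observation is that the pair $(\infty,2)$ is $L^2$-admissible, so that $\|u\|_{L^\infty_t H^1_x} \leq \|u\|_{S(L^2)} + \|\nabla u\|_{S(L^2)} \leq 2CE$ for every $u \in B$; this lets me treat the factor $\|u\|_{L^\infty_t H^1_x}^\theta$ appearing in Lemma \ref{lem_guz} as a harmless constant depending only on $E$, removing any circularity in closing the estimates.

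First I would verify that $\Phi$ maps $B$ into itself. For the scaling-critical norm, the inhomogeneous Strichartz estimate \eqref{KS2} together with \eqref{dual_s} (with $v=u$) gives
\begin{equation*}
\|\Phi(u)\|_{S(\dot{H}^{s_c})} \leq \rho + C\,(2CE)^\theta\|u\|_{S(\dot{H}^{s_c})}^{p-\theta} \leq \rho + C(2CE)^\theta(2\rho)^{p-\theta}.
\end{equation*}
Because $\theta \ll p-1$ forces $p-\theta > 1$, the nonlinear term is superlinear in $\rho$, so choosing $\delta_{sd}=\delta_{sd}(E)$ small makes it $\leq \rho$, yielding exactly $\|\Phi(u)\|_{S(\dot{H}^{s_c})} \leq 2\rho = 2\|e^{it\Delta}u_0\|_{S(\dot{H}^{s_c})}$. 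For the mass and gradient norms I would use \eqref{KS1}, \eqref{dual_l}, \eqref{dual_grad_l}, and \eqref{S1} to obtain
\begin{equation*}
\|\Phi(u)\|_{S(L^2)} + \|\nabla \Phi(u)\|_{S(L^2)} \leq C\|u_0\|_{H^1} + C(2CE)^\theta(2\rho)^{p-1-\theta}\bigl(\|u\|_{S(L^2)}+\|\nabla u\|_{S(L^2)}\bigr),
\end{equation*}
and again shrinking $\delta_{sd}$ absorbs the last term into the ball constraint, leaving $\leq 2CE$. The contraction property follows from the elementary pointwise bound $\bigl||u|^{p-1}u-|v|^{p-1}v\bigr| \lesssim (|u|^{p-1}+|v|^{p-1})|u-v|$, which feeds into a difference version of \eqref{dual_s}; for $u,v \in B$ the resulting Lipschitz constant is $\lesssim (2CE)^\theta(2\rho)^{p-1-\theta} < \tfrac12$. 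The Banach fixed point theorem then produces the unique solution $u$, global on $[0,+\infty)$ since all estimates are uniform on the full half-line, and the first three displayed bounds of the statement are precisely the ball constraints.

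Finally, to obtain scattering I would define
\begin{equation*}
u_+ = u_0 + i\int_0^{+\infty} e^{-i\tau\Delta}\,|x|^{-b}|u|^{p-1}u(\tau)\,d\tau,
\end{equation*}
and estimate, using \eqref{KS1}, \eqref{dual_l} and \eqref{dual_grad_l} on the interval $[t,+\infty)$,
\begin{equation*}
\|u(t)-e^{it\Delta}u_+\|_{H^1} = \left\|\int_t^{+\infty} e^{-i\tau\Delta}\,|x|^{-b}|u|^{p-1}u\,d\tau\right\|_{H^1} \lesssim (2CE)^\theta\,\|u\|_{S(\dot{H}^{s_c},[t,+\infty))}^{p-1-\theta}\bigl(\|u\|_{S(L^2)}+\|\nabla u\|_{S(L^2)}\bigr).
\end{equation*}
Since $\|u\|_{S(\dot{H}^{s_c},[0,+\infty))} < \infty$, the tail $\|u\|_{S(\dot{H}^{s_c},[t,+\infty))}$ tends to $0$ as $t\to+\infty$, which simultaneously shows that $u_+$ is well-defined in $H^1$ (take $t=0$) and that $u(t)-e^{it\Delta}u_+\to 0$ in $H^1$. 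I expect the only genuinely delicate points to be the difference estimate underlying the contraction --- which requires re-running the proof of Lemma \ref{lem_guz} with the pointwise difference bound rather than invoking it verbatim --- and the justification that the critical Strichartz tail vanishes, which relies on finiteness of the global $S(\dot{H}^{s_c})$ norm together with the behaviour of the $L^q_t L^r_x$ norms over shrinking intervals.
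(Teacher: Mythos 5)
Your proposal is correct and follows essentially the same route as the paper, which offers no proof of its own here but defers to \cite{Boa} and \cite{Campos_New_2019}, where the lemma is established by exactly this Duhamel contraction-mapping scheme: the ball defined by the critical norm $S(\dot{H}^{s_c})$ and the $H^1$-level norms $S(L^2)$, $\nabla\cdot\in S(L^2)$, closed via \eqref{dual_s}, \eqref{dual_l}, \eqref{dual_grad_l} together with \eqref{S1}, \eqref{KS1}, \eqref{KS2}, with scattering obtained from the same vanishing-tail estimate on $[t,+\infty)$. One minor simplification over what you anticipate: since \eqref{dual_s} is already stated with two independent entries $u$ and $v$, the Lipschitz step follows by applying it with $v$ replaced by $u-v$ (for a fixed dual pair, after the pointwise bound $\bigl||u|^{p-1}u-|v|^{p-1}v\bigr|\lesssim(|u|^{p-1}+|v|^{p-1})|u-v|$), so no genuine re-running of Lemma \ref{lem_guz} is needed.
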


\section{Proof of the scattering criterion}

The following result is the key to prove Theorem \ref{scattering_criterion}. It was proved initially for radial solutions to the INLS equation, in the intercritical setting, for $N \geq 3$, in \cite{Campos_New_2019}. Jason \cite{Jason2021} extended the result for non-radial data in the 3d-cubic setting, for $0 < b < 1/2$. Here, we prove the result for non-radial data the full intercritical range, for $N \geq 3$.
 \begin{lem}\label{linevo}
Let  $N \geq 3$, $1+\frac{4-2b}{N} < p < 1+\frac{4-2b}{N-2}$, $0 < b < \min\{N/2,2\}$ and $u$ be a (possibly non-radial) $H^1(\mathbb{R}^N)$-solution to \eqref{INLS} satisfying \eqref{E}. If $u$ satisfies \eqref{scacri} for some $0 < \epsilon < 1$, then there exist $\gamma, T > 0$ such that the following estimate is valid
\begin{equation}
\left\|e^{i(\cdot-T)\Delta}u(T)\right\|_{S\left(\dot{H}^{s_c}, [T, +\infty)\right)}  \lesssim\epsilon^\gamma.
\end{equation}
\end{lem}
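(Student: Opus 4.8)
\emph{Proof strategy.}
The plan is to run a variant of Tao's scattering-criterion argument, in the form used by Dodson--Murphy and by Campos, in which radiality is never invoked: every piece of spatial localization we need will come either from the ball $B(0,R)$ appearing in \eqref{scacri} or from the weight $|x|^{-b}$ in the nonlinearity $F(u):=|x|^{-b}|u|^{p-1}u$. The starting point is the Duhamel identity pushed back to time $0$: by the group law, for $t\ge T$,
\[
e^{i(t-T)\Delta}u(T)=e^{it\Delta}u_0+i\int_0^{T}e^{i(t-\tau)\Delta}F(u)(\tau)\,d\tau .
\]
Fixing two large parameters $T_0$ and $R$, both chosen as powers of $\epsilon^{-1}$, and setting $T'=T-T_0$, I split the integral as $\int_0^{T'}+\int_{T'}^{T}$, so that $e^{i(t-T)\Delta}u(T)$ becomes the sum of a \emph{linear} term, a \emph{far-past} Duhamel term, and a \emph{recent-past} Duhamel term. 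The time $T$ is selected, using the $\liminf$ in \eqref{scacri} together with finiteness of the global Strichartz tail, so that simultaneously $\int_{B(0,R)}|u(T)|^2\,dx\lesssim\epsilon^2$ and $\|e^{it\Delta}u_0\|_{S(\dot H^{s_c},[T,\infty))}$ is as small as we wish; the latter is legitimate since $u_0\in H^1\hookrightarrow\dot H^{s_c}$ for $0<s_c<1$, so \eqref{S2} gives a finite global norm whose tail in $T$ decays.

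I would then dispatch the three terms in turn. The linear term is $\le\epsilon$ by the choice of $T$. For the far-past term, $t\ge T$ and $\tau\le T'$ force $t-\tau\ge T_0$, so the dispersive estimate \eqref{decay} applies with a large time gap; bounding the spatial norm of the nonlinearity pointwise in time by $\|F(u)(\tau)\|_{L^{r'}_x}\lesssim\|u\|_{L^\infty_tH^1_x}^{p}\lesssim E^p$ via \eqref{L1}, and then integrating the time factor $|t-\tau|^{-\sigma}$ in $\tau$ over $[0,T']$ and in $t$ over $[T,\infty)$, produces a bound $\lesssim_E T_0^{-\kappa}$ for some $\kappa>0$. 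Checking that the admissible exponents $(q,r)\in\mathcal A_{s_c}$ permit a choice of $r'$ inside the window of \eqref{L1}, and that $\sigma>1$ and $q(\sigma-1)>1$ so both integrals converge, is exactly where $N\ge3$ enters; taking $T_0=\epsilon^{-a}$ converts this into a positive power of $\epsilon$.

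The recent-past term is the crux. Applying the dual-Strichartz estimate \eqref{dual_s} directly would only bound it by $\|u\|_{S(\dot H^{s_c},[T',T])}^{p-\theta}$, which is not small on an interval of length $T_0$; hence I exploit the local-mass smallness instead. First I propagate it from time $T$ to all of $[T',T]$: differentiating $\int\chi_R|u|^2$ and integrating by parts gives $\big|\tfrac{d}{dt}\int\chi_R|u|^2\big|\lesssim R^{-1}E^2$, so for $R\gtrsim T_0E^2\epsilon^{-2}$ one keeps $\int_{B(0,R)}|u(t)|^2\,dx\lesssim\epsilon^2$ throughout $[T',T]$ --- compatible with $T_0$ large precisely because $R$ is an even larger power of $\epsilon^{-1}$. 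I then estimate the term by duality against $\psi\in\dot H^{-s_c}$, reducing to $\int_{T'}^{T}\!\int|x|^{-b}|u|^{p}\,|e^{is\Delta}\psi|\,dx\,ds$, and split the spatial integral at $|x|=R$: on $|x|>R$ one has $|x|^{-b}\le R^{-b}$, which with H\"older, the Strichartz bound $\|e^{is\Delta}\psi\|_{S(\dot H^{-s_c})}\lesssim1$ and the uniform $H^1$ control yields a gain $R^{-b}$; on $|x|\le R$ I peel off one factor of $u$ in $L^2(B(0,R))$ (of size $\lesssim\epsilon$), handle the remaining $|u|^{p-1}$ by Sobolev, absorb $|x|^{-b}\in L^{d}(B(0,R))$ for some $d<N/b$ (here $b<N/2$ is used), and pair with $e^{is\Delta}\psi$ by H\"older in \emph{both} space and time so the time norm of $e^{is\Delta}\psi$ stays finite and independent of $|T-T'|$.

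The main obstacle is the bookkeeping of powers: the interior estimate produces one factor of $\epsilon$ (from the local mass) multiplied by positive powers of $R$ and $T_0$, i.e.\ negative powers of $\epsilon$, so the exponent $a$ and the power defining $R$ must be chosen to leave a net positive power $\epsilon^{\gamma}$. Establishing that such a choice exists across the full intercritical range is the delicate point, and it is precisely here that the smoother Strichartz framework of Definition \ref{As} and Lemma \ref{lem_guz}, in place of $L^4_tW^{1,3}_x$ estimates, buys the optimal range of $(p,b)$. Collecting the three bounds and letting $\gamma>0$ be the smallest exponent produced gives the stated estimate, after which small-data theory (Lemma \ref{small_data}) closes the proof of Theorem \ref{scattering_criterion}.
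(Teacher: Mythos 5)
There is a genuine gap in your far-past estimate. You propose to bound $F_2$ by applying the dispersive estimate \eqref{decay} pointwise in time, using \eqref{L1} for the nonlinearity, and then integrating $|t-\tau|^{-\sigma}$ over $\tau\in[0,T']$ and $t\in[T,\infty)$. But the norm you must control is $S(\dot H^{s_c},[T,\infty))$, a supremum over \emph{all} pairs $(q,r)\in\mathcal A_{s_c}$, and for pairs with $r$ near the lower endpoint $\left(\tfrac{2N}{N-2s_c}\right)^+$ one has $q\to\infty$, so the available decay exponent is $\sigma=\tfrac N2-\tfrac Nr=\tfrac 2q+s_c\to s_c<1$. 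Then $\int_0^{T'}|t-\tau|^{-\sigma}\,d\tau\sim t^{1-\sigma}$ grows in $t$ and the $L^q_t([T,\infty))$ norm diverges; in addition, for such pairs $r'$ exits the window $\tfrac{2(N-b)}{N+4-2b}<r'<\tfrac{2(N-b)}{N+2-2b}$ in which \eqref{L1} is valid. So the conditions ``$\sigma>1$ and $q(\sigma-1)>1$'' that you defer to a checking step actually \emph{fail} for part of the admissible family, for every $N\geq 3$ --- this is not where $N\geq3$ enters. The paper's fix is a two-norm interpolation that you are missing: since $t\geq T$ and the integrand lives on $I_2$, Duhamel lets one rewrite $F_2=e^{it\Delta}\bigl[e^{i(-T+\epsilon^{-\alpha})\Delta}u(T-\epsilon^{-\alpha})-u_0\bigr]$, a \emph{free evolution of a fixed state}, whose $L^c_tL^d_x$ norm for an $L^2$-admissible pair $(c,d)$ is bounded uniformly by the mass via \eqref{S1}; the dispersive argument is then run only at the single exponent $L^{1/\delta}_tL^{2N/(N-2-4\delta)}_x$, where the decay rate is $1+2\delta>1$ and the dual exponent $\tfrac{2N}{N+2+4\delta}$ sits inside the \eqref{L1} window precisely when $N\geq3$; every $(q,r)\in\mathcal A_{s_c}$ is recovered by interpolating these two bounds with weights $1-s_c$ and $s_c$, yielding $\epsilon^{\alpha\delta s_c}$. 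Without the free-evolution rewriting and this interpolation, your far-past bound does not close, no matter how $T_0$ and $R$ are tuned.

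Your recent-past treatment is essentially the paper's (propagate the local mass from time $T$ at rate $1/R$ using the cutoff identity, take $R$ a larger power of $\epsilon^{-1}$ than the window length, split space with a cutoff, and use $|x|^{-b}\lesssim R^{-b}$ on the exterior in place of radiality), and your duality formulation against $e^{is\Delta}\psi$ is a legitimate variant of the paper's use of \eqref{lit} with the explicit pair $(\hat q,\hat r)$. However, one concrete misstep: placing $|x|^{-b}$ in $L^d(B(0,R))$ with $d<N/b$ costs $\||x|^{-b}\|_{L^d(B(0,R))}\sim R^{N/d-b}$, a \emph{positive} power of $R$; since the mass-propagation step forces $R$ to be large (you cannot shrink it), this injects an avoidable negative power of $\epsilon$ into the interior bound and is the reason your exponent bookkeeping becomes ``delicate.'' The standard repair, used implicitly via Lemma \ref{lem_guz} in the paper's estimate \eqref{recent_past_ball1}, is to split at $|x|=1$: the singular weight is integrated on the unit ball with an $O(1)$ constant and bounded by $1$ outside, so the interior estimate carries \emph{no} $R$-growth at all, and the only loss is the time factor $|I_1|^{1/\hat q'}=\epsilon^{-\alpha/\hat q'}$, absorbed by the choice $\alpha=\hat q'\hat\theta/2$.
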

 \begin{proof} 
 
 For $N \geq 3$, fix the parameters $\alpha, \gamma >0$ (to be chosen later). From \eqref{S2}, there exists $T_{0} > \epsilon^{-\alpha}$ such that
\begin{equation}\label{T0}
\left\|e^{it\Delta}u_0\right\|_{S\left(\dot{H}^{s_c}, [T_0,+\infty) \right)} \leq \epsilon^{\gamma}.
\end{equation}

For $T\geq T_0$ to be chosen later, define  $I_1 :=\left[T-\epsilon^{-\alpha}, T\right]$, $I_2 := [0, T-\epsilon^{-\alpha}]$  and let $\eta$ denote a smooth, spherically symmetric function which equals $1$ on $B(0, 1/2)$ and $0$ outside $B(0,1)$. For any $R > 0$ use $\eta_R$ to denote the rescaling $\eta_R(x) := \eta(x/R)$. 

From Duhamel's formula
\begin{equation}
    u(T) = e^{iT\Delta}u_0 + i\int_0^{T}e^{i(T-s)\Delta}|x|^{-b}|u|^{p-1}u(s) \, ds,
\end{equation}
we obtain
\begin{equation}
e^{i(t-T)\Delta}u(T)  = e^{it\Delta}u_0 + iF_1 + iF_2,
\end{equation}
where, for $i = 1,2,$
$$
F_i = \int_{I_i} e^{i(t-s)\Delta}|x|^{-b}|u|^{p-1}u(s) \,ds.
$$
We refer, as usual, to $F_1$ as the ``recent past", and to $F_2$ as the ``distant past". By \eqref{T0}, it remains to estimate $F_1$ and $F_2$.

\textbf{Step 1. Estimate on recent past.}

By hypothesis \eqref{scacri}, we can fix $T\geq T_0$ such that
\begin{equation}\label{mass}
\int \eta_R(x)\left|u(T,x)\right|^2dx\lesssim \epsilon^2.
\end{equation}

Given the relation  (obtained by multiplying \eqref{INLS} by $\eta_R\bar{u}$ , taking the imaginary part and integrating by parts, see Tao \cite[Section 4]{Tao_Scat} for details)

$$
\partial_t\int \eta_R|u|^2\, dx = 2\Im\int\nabla\eta_R \cdot \nabla u \bar{u},
$$
we have, from \eqref{E}, for all times,
$$
\left| \partial_t \int \eta_R(x)|u(t,x)|^2dx\right| \lesssim \frac{1}{R},
$$

so that,  by \eqref{mass}, for $t \in I_1$,
\begin{equation}
    \int \eta_R(x)\left|u(t,x)\right|^2dx\lesssim \epsilon^2+\frac{\epsilon^{-\alpha}}{R}.
\end{equation}

If $R > \epsilon^{-(\alpha+2)}$, then we have $\left\| \eta_Ru\right\|_{L^\infty_{I_1}L^2_x} \lesssim 
\epsilon
$.


Define $(\hat{q},\hat{r}) \in \mathcal{A}_{s_c}$ as 

\begin{align}
    \hat{q} = \frac{4(p-1)(p+1-\theta)}{(p-1)[N(p-1)+2b]-\theta[N(p-1)-4+2b]},\,\,
    \hat{r} = \frac{N(p-1)(p+1-\theta)}{(p-1)(N-b)-\theta(2-b)}.
\end{align}
We have, by H\"older and Sobolev, for $t \in I_1$,

\begin{equation}\label{recent_past_ball1}
    ||\, \eta_R |x|^{-b}|u|^{p-1}u(t) ||_{L_x^{\hat{r}'}} \lesssim  \|u(t)\|^\theta_{H^1_x} \|u(t)\|^{p-1-\theta}_{L_x^{\hat{r}}}\|\eta_R u(t)\|_{L_x^{\hat{r}}} \lesssim \|\eta_R u(t)\|_{L_x^{\hat{r}}}.
\end{equation}
Now, letting $\hat{\theta}$ be the solution of $\frac{1}{\hat{r}} = \frac{\hat{\theta}}{2}+\frac{1-\hat{\theta}}{p^*}$, we have,
\begin{equation}\label{recent_past_ball2}
    \|\eta_R u(t)\|_{L_x^{\hat{r}}} \leq \|u(t)\|^{1-\hat{\theta}}_{L^{p^*}_x}\|\eta_R u(t)\|^{\hat{\theta}}_{L^2_x} \lesssim \epsilon^{\hat{\theta}},
\end{equation}
uniformly on time in $I_1$. We now exploit the decay of the nonlinearity, instead of assuming radiality\footnote{This is one of the crucial estimates which allow us to drop the radiality assumption.}, to estimate, by H\"older and Sobolev, for $R>0$ large enough (depending on $\epsilon$) and $t \in I_1$,
\begin{align}\label{recent_past_out}
    ||\, (1-\eta_R) |x|^{-b}|u|^{p-1}u(t) ||_{L_x^{\hat{r}'}} &\leq ||\,|x|^{-b}|u|^{p-1}u(t) ||_{L_{\{|x|>R/2\}}^{\hat{r}'}} \\&\leq \|\, |x|^{-b}\|_{L_{\{|x|>R/2\}}^{r_1}} \|u(t)\|^\theta_{L^{\theta r_2}_x} \|u(t)\|^{p-\theta}_{L_x^{\hat{r}}}\\
    & \lesssim \frac{1}{R^{br_1-N}} \|u(t)\|^p_{H^1_x} \lesssim \epsilon^{\hat{\theta}},
\end{align}
where $r_1$ and $r_2$ are such that $br_1 > N$, $\theta r_2 \in (2,N(p-1)/(2-b))$ and
\begin{equation}
    \frac{1}{\hat{r}'} = \frac{1}{r_1} + \frac{1}{r_2} + \frac{p-\theta}{\hat{r}}.
\end{equation}

%
%

Using the local-in-time Strichartz estimate \eqref{lit}, together with estimates \eqref{recent_past_ball1}, \eqref{recent_past_ball2} and \eqref{recent_past_out}, we bound

\begin{align}
\left\| \int_{I_1} e^{i(t-s)\Delta}|x|^{-b}|u|^{p-1}u(s) \,ds\right\|_{S(\dot{H}^{s_c},[T,+\infty))} &\leq ||\, |x|^{-b}|u|^{p-1}u ||_{S'(\dot{H}^{-s_c},I_1)}\\
&\hspace{-5cm}\leq ||\, \eta_R |x|^{-b}|u|^{p-1}u ||_{L^{\hat{q}'}_{I_1}L_x^{\hat{r}'}} + ||\, (1-\eta_R) |x|^{-b}|u|^{p-1}u ||_{L^{\hat{q}'}_{I_1}L_x^{\hat{r}'}}\\
&\hspace{-5cm}\lesssim|I_1|^{1/\hat{q}'}\epsilon^{\hat{\theta}} = \epsilon^{\hat{\theta}-\alpha/\hat{q}'}= \epsilon^{\hat{\theta}/2},
\end{align}
where we chose $\alpha := \hat{q}'\hat{\theta}/{2}$.
 
%
%

\textbf{Step 2. Estimate on distant past.}

The estimate for the distant past is the same as in \cite{Campos_New_2019}, as radiality does not play a role in this part of the estimate. We provide the argument here for completeness. Let  $(q, r) \in \mathcal{A}_{s_c}$. 
Define, for small $\delta>0$,
\begin{equation}
	\frac{1}{c} = \left(\frac{1}{1-s_c}\right)\left[\frac{1}{q}-\delta s_c\right]
\end{equation}
and
\begin{equation}
    \frac{1}{d} = \left(\frac{1}{1-s_c}\right)\left[\frac{1}{r}-s_c\left(\frac{N-2-4\delta}{2N}\right)\right].
\end{equation}

We see that $(c,d) \in \mathcal{A}_0$ (see \cite[Section 3]{Campos_New_2019}). By interpolation,

$$\left\|F_2\right\|_{L_{[T,+\infty)}^{q}L_x^{r}}  
\leq 
\left\|F_2\right\|^{1-s_c}_{L_{[T,+\infty)}^{c}L_x^{d}}
\left\|F_2\right\|^{s_c}_{L_{[T,+\infty)}^{\frac{1}{\delta}}L_x^{\frac{2N}{N-2-4\delta}}}.
$$ Using Duhamel's principle, write
$$
F_2 = e^{it\Delta}\left[e^{i(-T+\epsilon^{-\alpha})\Delta}u(T-\epsilon^{-\alpha})-u(0)\right].
$$

Thus, by the Strichartz estimate \eqref{S1},
\begin{align}
\left\| F_2\right\|_{L_{[T,+\infty)}^{q}L_x^{r}}
&\leq\left\| e^{it\Delta}\left[e^{i(-T+\epsilon^{-\alpha})\Delta}u(T-\epsilon^{-\alpha})-u(0)\right]\right\|^{1-s_c}_{L_{[T,+\infty)}^{c}L_x^{d}}
\left\|F_2\right\|^{s_c}_{L_{[T,+\infty)}^{\frac{1}{\delta}}L_x^{\frac{2N}{N-2-4\delta}}}\\
&\lesssim\left(\left\|u\right\|_{L^\infty_t L^2_x}\right)^{1-s_c} \left\|F_2\right\|^{s_c}_{L_{[T,+\infty)}^{\frac{1}{\delta}}L_x^{\frac{2N}{N-2-4\delta}}}
\lesssim \epsilon^{\alpha \delta s_c},
\end{align}

since, by \eqref{decay} and \eqref{L1},

\begin{align}
\left\|F_2\right\|_{L_{[T,+\infty)}^{\frac{1}{\delta}}L_x^{\frac{2N}{N-2-4\delta}}}
&\lesssim \left\|\int_{I_2} |\cdot-s|^{-(1+2\delta)}\left\| |x|^{-b} |u|^{p-1}u(s) \right\|_{L^{\frac{2N}{N+2+4\delta}}_x}\, ds\right\|_{L_{[T,+\infty)}^{\frac{1}{\delta}}}\\
&\lesssim \|u\|_{L_t^\infty H_x^1}^{p}\left\|\left(\cdot -T+\epsilon^{-\alpha}\right)^{-2\delta}\right\|_{L_{[T,+\infty)}^{\frac{1}{\delta}}}\\
&\lesssim \epsilon^{\alpha\delta}.
\end{align}

Therefore, defining $\gamma := \min\{ \hat{\theta}/2, \alpha \delta s_c\}$ and recalling that

$$
e^{i(t-T)\Delta}u(T) = e^{it\Delta}u_0 + iF_1 + iF_2,
$$
we have


$$
\left\|e^{i(\cdot-T)\Delta} u(T)\right\|_{S\left(\dot{H}^{s_c}, [T,+\infty) \right)}  \lesssim\epsilon^\gamma.
$$


Hence, Lemma \ref{linevo} is proved.
\end{proof}
\begin{proof}[Proof of Theorem \ref{scattering_criterion}]

Choose $\epsilon$ is small enough so that, by Lemma \ref{linevo}, $$\left\|e^{i(\cdot)\Delta}u(T)\right\|_{S\left(\dot{H}^{s_c}, [0, +\infty)\right)}  = \left\|e^{i(\cdot-T)\Delta} u(T)\right\|_{S\left(\dot{H}^{s_c}, [T, +\infty)\right)} \leq c\epsilon^\gamma\leq \delta_{sd},
 $$
 
 where $\delta_{sd}$ is given in Lemma \ref{small_data}. Thus, by small data theory, $u$ scatters forward in time in $H^1$, as desired.
\end{proof}

\section{Proof of scattering}
We now turn to Theorem \ref{teo1}. The main idea behind the proof is to combine the decay of the nonlinearity (instead of exploiting some form of radial decay) with a truncated Virial identity. By choosing a suitable weight, and employing coercivity on large balls around the origin, one can control a time-averaged weighed $L^p$ norm on these balls. Averaging is necessary due to the lack of uniform estimates in time, since we are not employing concentration-compactness as in Holmer-Roudenko \cites{HR_Scat,DHR_Scat}.

We start with the following ``trapping'' lemmas, which can be found in \cite{Campos_New_2019} and in Farah and Guzm\'an \cite{FG_scat}.
\begin{lem}[Energy trapping]\label{coercivity}
Let $N \geq 1$ and $0 < s_c < 1$. If  $$M[u_0]^\frac{1-s_c}{s_c}E[u_0] < (1-\delta)M[u_0]^\frac{1-s_c}{s_c}E[u_0]$$ for some $\delta > 0$ and $$\|u_0\|_{L^2}^\frac{1-s_c}{s_c}\|\nabla u_0\|_{L^2} \leq \|Q\|_{L^2}^\frac{1-s_c}{s_c}\|\nabla Q\|_{L^2},$$ then there exists $\delta' = \delta'(\delta) > 0$ such that
$$
\|u_0\|_{L^2}^\frac{1-s_c}{s_c}\|\nabla u_0\|_{L^2} < (1-\delta') \|Q\|_{L^2}^\frac{1-s_c}{s_c}\|\nabla Q\|_{L^2}.
$$ for all $t \in I$, where $I \subset \mathbb{R}$ is the maximal interval of existence of the solution $u(t)$ to \eqref{INLS}. Moreover, $I = \mathbb{R}$ and $u$ is uniformly bounded in $H^1$.
\end{lem}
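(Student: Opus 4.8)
The plan is to reduce the statement to the analysis of a single real-variable function, via the sharp weighted Gagliardo--Nirenberg inequality attached to \eqref{INLS} together with the Pohozaev/Nehari identities satisfied by the ground state $Q$. (Here the hypotheses are read in their intended form: $M[u_0]^{(1-s_c)/s_c}E[u_0]<(1-\delta)M[Q]^{(1-s_c)/s_c}E[Q]$, with the conclusion asserted for $u(t)$ at every $t\in I$.) First I would recall the sharp inequality
\[
\int |x|^{-b}|u|^{p+1}\,dx \le C_{GN}\,\|\nabla u\|_{L^2}^{\sigma}\,\|u\|_{L^2}^{p+1-\sigma},
\qquad \sigma := \tfrac{N(p-1)+2b}{2},
\]
whose optimal constant $C_{GN}$ is attained precisely by $Q$. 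A short computation using $s_c=\tfrac{N}{2}-\tfrac{2-b}{p-1}$ gives the two identities $\sigma-2=s_c(p-1)$ and $p+1-\sigma=(p-1)(1-s_c)$, so that substituting the bound into \eqref{def_energy} and multiplying by $M[u]^{(1-s_c)/s_c}=\|u\|_{L^2}^{2(1-s_c)/s_c}$ collapses everything into the scaling-invariant quantity $g:=\|u\|_{L^2}^{(1-s_c)/s_c}\|\nabla u\|_{L^2}$:
\[
M[u]^{\frac{1-s_c}{s_c}}E[u] \;\ge\; f(g), \qquad f(x):=\tfrac12 x^2 - \tfrac{C_{GN}}{p+1}x^{\sigma}.
\]

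Next I would analyze $f$ on $[0,\infty)$. Since $\sigma>2$ (which is exactly $s_c>0$), $f$ increases from $f(0)=0$ to a unique interior maximum at $x_0=\bigl(\tfrac{p+1}{C_{GN}\sigma}\bigr)^{1/(\sigma-2)}$ and then decreases to $-\infty$. The crucial identification is that the Pohozaev relation for $Q$, namely $\|\nabla Q\|_{L^2}^2=\tfrac{\sigma}{p+1}\int|x|^{-b}|Q|^{p+1}$ combined with the fact that $Q$ saturates the Gagliardo--Nirenberg inequality, yields exactly $x_0=\|Q\|_{L^2}^{(1-s_c)/s_c}\|\nabla Q\|_{L^2}$ and $\max f = f(x_0)=M[Q]^{(1-s_c)/s_c}E[Q]$. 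Thus the hypotheses become $M[u_0]^{(1-s_c)/s_c}E[u_0]<(1-\delta)f(x_0)$ and $g(0)\le x_0$.

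Then comes the trapping/continuity argument. By conservation of mass and energy, $f(g(t))\le M[u(t)]^{(1-s_c)/s_c}E[u(t)]=M[u_0]^{(1-s_c)/s_c}E[u_0]<(1-\delta)f(x_0)$ for every $t\in I$. The sublevel set $\{x\ge 0: f(x)<(1-\delta)f(x_0)\}$ splits into two disjoint pieces $[0,x_-)$ and $(x_+,\infty)$ with $x_-<x_0<x_+$ and $f(x_\pm)=(1-\delta)f(x_0)$. Since $g(0)\le x_0<x_+$, the initial datum lies in $[0,x_-)$; as $t\mapsto g(t)$ is continuous (from $u\in C(I,H^1)$) and can never equal the forbidden value $x_-$, the connected image $g(I)$ remains in $[0,x_-)$. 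Choosing $\delta':=1-x_-/x_0>0$, which depends only on $\delta$ through $f(x_-)=(1-\delta)f(x_0)$ and tends to $0$ as $\delta\to 0$, gives $g(t)<(1-\delta')x_0$ for all $t\in I$, which is the claimed bound. Finally, since the mass is conserved, this uniform control of $g$ forces a uniform bound on $\|\nabla u(t)\|_{L^2}$, hence on $\|u(t)\|_{H^1}$; the blow-up alternative from the local theory then yields $I=\mathbb{R}$.

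I expect the one genuinely delicate point to be the sharp-constant identification: one must invoke that the best constant $C_{GN}$ in the weighted Gagliardo--Nirenberg inequality is realized by the positive radial solution $Q$ of \eqref{def_Q}, and then extract the two Pohozaev/Nehari identities to pin down both $x_0$ and $f(x_0)$ exactly in terms of $Q$. Everything else — the elementary shape analysis of $f$ and the topological trapping via continuity in time — is routine once this correspondence is established, and follows as in \cite{Campos_New_2019} and \cite{FG_scat}.
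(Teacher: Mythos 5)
Your proof is correct and is essentially the argument the paper itself invokes by citation for this lemma (Campos \cite{Campos_New_2019} and Farah--Guzm\'an \cite{FG_scat}, going back to Farah \cite{Farah_well} in the Holmer--Roudenko style): the sharp weighted Gagliardo--Nirenberg inequality with optimal constant attained by $Q$, the Pohozaev/Nehari identities pinning down $x_0=\|Q\|_{L^2}^{(1-s_c)/s_c}\|\nabla Q\|_{L^2}$ and $f(x_0)=M[Q]^{\frac{1-s_c}{s_c}}E[Q]$, and the trapping of the continuous, scaling-invariant quantity $g(t)=\|u(t)\|_{L^2}^{(1-s_c)/s_c}\|\nabla u(t)\|_{L^2}$ in the left component of the sublevel set via mass/energy conservation, followed by the blow-up alternative to get $I=\mathbb{R}$. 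You also correctly read the statement in its intended form, repairing the two typos (the right-hand side of the energy hypothesis should be $M[Q]^{\frac{1-s_c}{s_c}}E[Q]$, and the conclusion should bound $\|u(t)\|_{L^2}^{\frac{1-s_c}{s_c}}\|\nabla u(t)\|_{L^2}$ for all $t\in I$).
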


\begin{lem}\label{coercivity2}
Suppose, for $f \in H^1(\mathbb{R}^N)$, $N \geq 1$, that $$\|f\|_{L^2}^\frac{1-s_c}{s_c}\|\nabla f\|_{L^2} < (1-\delta) \|Q\|_{L^2}^\frac{1-s_c}{s_c}\|\nabla Q\|_{L^2}.$$ Then there exists $\delta' = \delta'(\delta) > 0$  so that 
$$
 \int|\nabla f|^2 + \left(\frac{N-b}{p+1}-\frac{N}{2}\right)\int|x|^{-b}|f|^{p+1} \geq \delta'\int|x|^{-b}|f|^{p+1}.
$$
\end{lem}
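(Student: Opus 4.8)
The plan is to reduce the claim to the sharp weighted Gagliardo--Nirenberg inequality together with the Pohozaev identities satisfied by the ground state $Q$. Write $K[f]=\int|x|^{-b}|f|^{p+1}$ and set $c_0=\frac{N}{2}-\frac{N-b}{p+1}$, which is strictly positive because $N(p-1)+2b>0$. Then the functional to be bounded below is $G[f]=\|\nabla f\|_{L^2}^2-c_0K[f]$. First I would recall the sharp inequality
\[
K[f]\le C_{GN}\,\|\nabla f\|_{L^2}^{2\theta_1}\,\|f\|_{L^2}^{2\theta_2},
\]
whose optimizer is $Q$, where the exponents are forced by the scaling of the three quantities to be $2\theta_1=2+s_c(p-1)$ and $2\theta_2=(1-s_c)(p-1)$; in particular $2\theta_1+2\theta_2=p+1$ and $2\theta_1-2=s_c(p-1)$.

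Second, since $2\theta_1-2=s_c(p-1)$ and $2\theta_2=(1-s_c)(p-1)$, I would factor $\|\nabla f\|_{L^2}^2$ out of the Gagliardo--Nirenberg bound and recognize the scale-invariant combination $\mathcal{G}(f):=\|f\|_{L^2}^{(1-s_c)/s_c}\|\nabla f\|_{L^2}$, using that $\big(\|\nabla f\|_{L^2}^{s_c}\|f\|_{L^2}^{1-s_c}\big)^{p-1}=\mathcal{G}(f)^{s_c(p-1)}$. This gives
\[
G[f]\ge \|\nabla f\|_{L^2}^2\Big(1-c_0C_{GN}\,\mathcal{G}(f)^{s_c(p-1)}\Big).
\]
The key computation is to evaluate the bracket at $f=Q$. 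Multiplying \eqref{def_Q} by $Q$ and integrating gives the Nehari identity $\|\nabla Q\|_{L^2}^2+\|Q\|_{L^2}^2=K[Q]$, while pairing with $x\cdot\nabla Q$ gives the Pohozaev identity $\tfrac{N-2}{2}\|\nabla Q\|_{L^2}^2+\tfrac{N}{2}\|Q\|_{L^2}^2=\tfrac{N-b}{p+1}K[Q]$. Combining the two yields $G[Q]=\tfrac{N}{2}\big(\|\nabla Q\|_{L^2}^2-K[Q]+\|Q\|_{L^2}^2\big)=0$, hence $c_0K[Q]=\|\nabla Q\|_{L^2}^2$. Since $Q$ saturates Gagliardo--Nirenberg, this is exactly the statement that $c_0C_{GN}\,\mathcal{G}(Q)^{s_c(p-1)}=1$.

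Consequently the bracket at a general $f$ equals $1-\big(\mathcal{G}(f)/\mathcal{G}(Q)\big)^{s_c(p-1)}$, and the hypothesis $\mathcal{G}(f)<(1-\delta)\mathcal{G}(Q)$ gives
\[
G[f]\ge \beta\,\|\nabla f\|_{L^2}^2, \qquad \beta:=1-(1-\delta)^{s_c(p-1)}>0.
\]
In particular $G[f]\ge0$, so that $\|\nabla f\|_{L^2}^2\ge c_0K[f]$; feeding this back into the previous line yields $G[f]\ge \beta c_0 K[f]$, which is the assertion with $\delta'=\beta c_0$. The step I expect to be the main obstacle is the bookkeeping of the sharp Gagliardo--Nirenberg exponents and the verification, via the Nehari and Pohozaev identities, that the optimizer makes the coefficient at $Q$ exactly $1$; once $G[Q]=0$ is established, the remaining estimates are a routine consequence of the scaling normalization.
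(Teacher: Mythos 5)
Your argument is correct and is essentially the proof the paper delegates to its references (\cite{Campos_New_2019}, \cite{FG_scat}): the sharp weighted Gagliardo--Nirenberg inequality $\int|x|^{-b}|f|^{p+1}\lesssim\|\nabla f\|_{L^2}^{2+s_c(p-1)}\|f\|_{L^2}^{(1-s_c)(p-1)}$, whose sharp constant is attained by $Q$ (Farah \cite{Farah_well}), combined with the Nehari and Pohozaev identities to normalize the coefficient to $1$ at $Q$, exactly as you do; your exponent bookkeeping, the identity $c_0K[Q]=\|\nabla Q\|_{L^2}^2$ with $c_0=\frac{N(p-1)+2b}{2(p+1)}>0$, and the final bootstrap $G[f]\ge\beta\|\nabla f\|_{L^2}^2\ge\beta c_0K[f]$ all check out. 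The only thing to make explicit is the citation of Farah's sharp inequality, since the fact that $Q$ is an optimizer is an input, not something your scaling computation alone provides.
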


From now on, we consider $u$ to be a solution to \eqref{INLS} satisfying the conditions \eqref{ME<1} and \eqref{MK<1}.
In particular, by Lemma \ref{coercivity}, $u$ is global and uniformly bounded in $H^1$. Moreover, there exists $\delta > 0$ such that 
\begin{equation}\label{condition_u}
\sup_{t \in \mathbb{R}}\|u_0\|_{L^2}^\frac{1-s_c}{s_c}\|\nabla u(t)\|_{L^2} <(1-2\delta) \| Q\|_{L^2}^\frac{1-s_c}{s_c}\|\nabla Q\|_{L^2}
\end{equation}
In the spirit of Dodson and Murphy \cite{MD_New}, local coercivity was proved in \cite{Campos_New_2019}. They proved:
\begin{lem}\label{lem_comut} For $N \geq 1$, let $\phi$ be a smooth cutoff to the set $\{|x|\leq\frac{1}{2}\}$ and define $\phi_R(x) = \phi\left(\frac{x}{R}\right)$. If $f \in H^1(\mathbb{R}^N)$ , then
\begin{equation}\label{comut}
\int|\nabla(\phi_Rf)|^2=\int\phi_R^2|\nabla f|^2-\int \phi_R\Delta(\phi_R)|f|^2.
\end{equation}
In particular, 
\begin{equation}\label{comut2}
\left|\int|\nabla(\phi_Rf)|^2 - \int\phi_R^2|\nabla f|^2\right|\leq\frac{c}{R^2}\|f\|^2_{L^2}.
\end{equation}
\end{lem}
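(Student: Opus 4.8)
The plan is to prove the identity \eqref{comut} by a direct computation, expanding the gradient via the product rule and moving the cross term by integration by parts. Since $\phi_R$ is real-valued and $f$ is complex-valued, I first write $\nabla(\phi_R f) = f\,\nabla\phi_R + \phi_R\,\nabla f$, so that
\begin{equation}
|\nabla(\phi_R f)|^2 = |f|^2|\nabla\phi_R|^2 + \phi_R^2|\nabla f|^2 + 2\phi_R\,\Re(\bar f\,\nabla f)\cdot\nabla\phi_R.
\end{equation}
The key algebraic observation is that the cross term simplifies through the identity $\Re(\bar f\,\nabla f) = \tfrac12\nabla|f|^2$, which turns it into $\phi_R\,\nabla\phi_R\cdot\nabla|f|^2$. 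This is the only place where the complex structure of $f$ enters, and it is what converts a term involving $\nabla f$ into a total-derivative expression amenable to integration by parts.

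Next I would integrate the cross term by parts against the smooth, compactly supported vector field $\phi_R\nabla\phi_R$, obtaining
\begin{equation}
\int \phi_R\,\nabla\phi_R\cdot\nabla|f|^2 = -\int\nabla\cdot(\phi_R\nabla\phi_R)\,|f|^2 = -\int\bigl(|\nabla\phi_R|^2 + \phi_R\Delta\phi_R\bigr)|f|^2.
\end{equation}
Substituting back, the two terms $\int|f|^2|\nabla\phi_R|^2$ cancel exactly, leaving precisely \eqref{comut}. The integration by parts is legitimate for $f\in H^1$ rather than merely smooth $f$: the field $\phi_R\nabla\phi_R$ is $C_c^\infty$, while $|f|^2\in W^{1,1}$ since $\nabla|f|^2 = 2\Re(\bar f\,\nabla f)\in L^1$ by Cauchy--Schwarz. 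Alternatively one may carry out the computation for $f\in C_c^\infty(\mathbb{R}^N)$ and pass to the limit using the density of $C_c^\infty$ in $H^1$, noting that both sides of \eqref{comut} are continuous with respect to the $H^1$ topology on any fixed compact support of $\phi_R$.

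Finally, \eqref{comut2} follows from \eqref{comut} together with the scaling of the cutoff. Since $\phi_R(x)=\phi(x/R)$, we have $\Delta\phi_R(x) = R^{-2}(\Delta\phi)(x/R)$, so that $\|\phi_R\Delta\phi_R\|_{L^\infty_x}\leq c R^{-2}$ with $c = \|\phi\|_{L^\infty}\|\Delta\phi\|_{L^\infty}$, which is finite because $\phi$ is smooth and compactly supported. Estimating the remainder term crudely then gives
\begin{equation}
\left|\int\phi_R\Delta\phi_R\,|f|^2\right| \leq \frac{c}{R^2}\int|f|^2 = \frac{c}{R^2}\|f\|_{L^2}^2,
\end{equation}
which is exactly \eqref{comut2}. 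There is no substantive obstacle here: the computation is elementary, and the only point requiring a word of care is the justification of the integration by parts at the $H^1$ level of regularity, handled either directly or by the density argument above.
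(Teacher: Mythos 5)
Your proof is correct and is exactly the standard argument: expand $|\nabla(\phi_R f)|^2$ by the product rule, convert the cross term via $\Re(\bar f\,\nabla f)=\tfrac12\nabla|f|^2$, integrate by parts so the $|\nabla\phi_R|^2$ terms cancel, and then obtain \eqref{comut2} from the scaling $\|\phi_R\Delta\phi_R\|_{L^\infty}\lesssim R^{-2}$. The paper itself gives no proof of this lemma (it cites \cite{Campos_New_2019}, where the same computation is carried out), so your write-up matches the intended argument, and your care in justifying the integration by parts at the $H^1$ level is a welcome extra detail.
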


\begin{lem}[Local coercivity]\label{coercivity_balls} For $N \geq 1$, let $u$ be a globally defined $H^1(\mathbb{R}^N)$-solution to \eqref{INLS} satisfying \eqref{condition_u}. There exists $ \bar{R} =\  \bar{R}(\delta, M[u_0],Q, s_c)>0$ such that, for any $R \geq \bar{R}$,
\begin{equation}
\sup_{t \in \mathbb{R}}\|\phi_Ru(t)\|^\frac{1-s_c}{s_c}_{L^2}\|\nabla(\phi_R u(t))\|_{L^2} \leq (1-\delta)\|Q\|^\frac{1-s_c}{s_c}_{L^2}\|\nabla Q\|_{L^2}.
\end{equation}
In particular, by Lemma \ref{coercivity2}, there exists $\delta' = \delta'(\delta) >0$ such that 
\begin{equation}\label{local_coercivity}
\int|\nabla ( \phi_R u(t))|^2 + \left(\frac{N-b}{p+1}-\frac{N}{2}\right)\int|x|^{-b}|\phi_R u(t)|^{p+1} \geq \delta'\int|x|^{-b}|\phi_Ru(t)|^{p+1}.
\end{equation}
\end{lem}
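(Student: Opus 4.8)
The plan is to transfer the uniform subcritical bound \eqref{condition_u} enjoyed by $u(t)$ to the truncated function $\phi_R u(t)$, degrading the constant from $(1-2\delta)$ to $(1-\delta)$, by checking that truncation perturbs the mass and the kinetic energy by controllable amounts only. First I would handle the mass: since $0 \leq \phi_R \leq 1$ and the mass is conserved,
\[
\|\phi_R u(t)\|_{L^2} \leq \|u(t)\|_{L^2} = \|u_0\|_{L^2} \quad \text{for all } t,
\]
and, because $\frac{1-s_c}{s_c} > 0$, this gives $\|\phi_R u(t)\|_{L^2}^{\frac{1-s_c}{s_c}} \leq \|u_0\|_{L^2}^{\frac{1-s_c}{s_c}}$ uniformly in $t$.

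Next I would control the gradient via the commutator identity of Lemma \ref{lem_comut}. From \eqref{comut2} together with $\phi_R^2 \leq 1$ and mass conservation,
\[
\|\nabla(\phi_R u(t))\|_{L^2}^2 \leq \int \phi_R^2 |\nabla u(t)|^2 + \frac{c}{R^2}\|u_0\|_{L^2}^2 \leq \|\nabla u(t)\|_{L^2}^2 + \frac{c}{R^2}\|u_0\|_{L^2}^2,
\]
again uniformly in $t$. Multiplying the two estimates and squaring yields
\[
\left(\|\phi_R u(t)\|_{L^2}^{\frac{1-s_c}{s_c}}\|\nabla(\phi_R u(t))\|_{L^2}\right)^{2} \leq \|u_0\|_{L^2}^{\frac{2(1-s_c)}{s_c}}\|\nabla u(t)\|_{L^2}^2 + \frac{c}{R^2}\|u_0\|_{L^2}^{\frac{2(1-s_c)}{s_c}+2}.
\]

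By the uniform hypothesis \eqref{condition_u}, the first term on the right is, for every $t$, strictly below $(1-2\delta)^2\big(\|Q\|_{L^2}^{\frac{1-s_c}{s_c}}\|\nabla Q\|_{L^2}\big)^2$. Since $(1-2\delta)^2 < (1-\delta)^2$, the gap
\[
\left[(1-\delta)^2 - (1-2\delta)^2\right]\big(\|Q\|_{L^2}^{\frac{1-s_c}{s_c}}\|\nabla Q\|_{L^2}\big)^2
\]
is a strictly positive quantity depending only on $\delta$, $s_c$ and $Q$. I would then fix $\bar{R}$ large enough, depending only on $\delta$, $M[u_0]$, $Q$ and $s_c$ through the explicit constants above, so that for every $R \geq \bar{R}$ the error $\frac{c}{R^2}\|u_0\|_{L^2}^{\frac{2(1-s_c)}{s_c}+2}$ is dominated by this gap. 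This produces the claimed bound for all $R \geq \bar{R}$, uniformly in $t$, and the ``in particular'' conclusion \eqref{local_coercivity} then follows by applying Lemma \ref{coercivity2} to $f = \phi_R u(t)$ for each fixed $t$.

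There is no genuine obstacle beyond bookkeeping here; the only point requiring care is that the correction $\frac{c}{R^2}\|u_0\|_{L^2}^2$ from the commutator is honestly lower order and \emph{independent of $t$}, which is precisely what makes the threshold $\bar{R}$ uniform in time. This in turn hinges on \eqref{condition_u} being a bound for the supremum over all $t$ rather than merely a pointwise statement, so that the single positive gap above can be used to absorb the truncation error simultaneously at every time.
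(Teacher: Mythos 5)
Your proposal is correct and is essentially the intended argument: the paper states Lemma \ref{lem_comut} immediately beforehand precisely so that the truncation error in the kinetic term is $O(R^{-2})\|u_0\|_{L^2}^2$, uniformly in $t$, while $\|\phi_R u(t)\|_{L^2}\leq\|u_0\|_{L^2}$ handles the mass factor, and the quantitative gap between $(1-2\delta)$ in \eqref{condition_u} and $(1-\delta)$ absorbs the error for $R\geq\bar{R}(\delta,M[u_0],Q,s_c)$ --- exactly the proof from \cite{Campos_New_2019} to which the paper defers. The concluding application of Lemma \ref{coercivity2} to $f=\phi_R u(t)$ is likewise as in the paper.
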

We exploit the coercivity given by the previous lemma by making use of the Virial identity (see Dodson and Murphy \cite[Lemma 3.3]{MD_New}, Farah and Guzm\'an \cite[Proposition 7.2]{FG_scat})

\begin{lem}[Virial identity]\label{Morawetz}
Let $a: \mathbb{R}^N \rightarrow \mathbb{R}$ be a real-valued weight. If $|\nabla a| \in L^\infty$, define
$$
Z(t) = 2 \Im\int \bar{u} \nabla u \cdot \nabla a \, dx.
$$
Then, if $u$ is a solution to \eqref{INLS}, we have the following identity 
\begin{align}
\frac{d}{dt}Z(t)& = \left(\frac{4}{p+1}-2\right)\int|x|^{-b}|u|^{p+1} \Delta a  -\frac{4b}{p+1}\int|x|^{-b-2}|u|^{p+1}x\cdot\nabla a \\
&\quad
-\int|u|^2\Delta\Delta a + 4\Re\sum_{i,j}\int a_{ij}\bar{u}_i u_j.
\end{align}
\end{lem}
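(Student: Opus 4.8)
The plan is to differentiate $Z(t)$ directly in time and substitute the equation, carrying out the computation for smooth, rapidly decaying solutions and then passing to general $H^1$ data by a density argument (the hypothesis $b<\min\{N/2,2\}$ guarantees that all weighted integrals below converge, and implicitly $a$ is taken smooth enough that every derivative of $a$ appearing on the right-hand side makes sense). Writing $g := |x|^{-b}|u|^{p-1}$, which is real-valued, the equation reads $u_t = i\Delta u + i g u$, so that
\[
\frac{d}{dt}Z(t) = 2\Im\int \nabla a\cdot\big(\bar u_t\,\nabla u + \bar u\,\nabla u_t\big)\,dx.
\]
I would split the result into a \emph{linear part}, collecting the contributions of the $\Delta u$ and $\Delta\bar u$ terms, and a \emph{nonlinear part}, collecting the $gu$ and $g\bar u$ terms.

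For the nonlinear part, substituting $\bar u_t = -i\Delta\bar u - i g\bar u$ and $u_t = i\Delta u + i g u$ and using $\nabla(gu) = u\nabla g + g\nabla u$, the two terms proportional to $g\bar u\,\nabla u$ cancel, leaving the single expression $2\Im\int\nabla a\cdot i|u|^2\nabla g\,dx = 2\int|u|^2\,\nabla g\cdot\nabla a\,dx$ (which is real since $a$ and $g$ are real). Next I would expand, using $\nabla(|x|^{-b}) = -b|x|^{-b-2}x$ and the identity $|u|^{p-1}\Re(\bar u\nabla u) = \tfrac{1}{p+1}\nabla(|u|^{p+1})$, to obtain $|u|^2\nabla g = -b|x|^{-b-2}|u|^{p+1}x + \tfrac{p-1}{p+1}|x|^{-b}\nabla(|u|^{p+1})$. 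Integrating the second contribution by parts (again using $\nabla(|x|^{-b}) = -b|x|^{-b-2}x$) turns it into a multiple of $\int|x|^{-b}|u|^{p+1}\Delta a$ plus a multiple of $\int|x|^{-b-2}|u|^{p+1}x\cdot\nabla a$; combining the latter with the $-2b\int|x|^{-b-2}|u|^{p+1}x\cdot\nabla a$ coming from the weight derivative, the coefficients collapse to exactly $\frac{4}{p+1}-2$ and $-\frac{4b}{p+1}$ as claimed.

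For the linear part, the contribution is $-2\Re\int\Delta\bar u\,(\nabla a\cdot\nabla u)\,dx + 2\Re\int\bar u\,(\nabla a\cdot\nabla\Delta u)\,dx$, which is exactly the classical free-Schrödinger virial quantity; repeated integration by parts yields $4\Re\sum_{i,j}\int a_{ij}\bar u_i u_j\,dx - \int|u|^2\Delta\Delta a\,dx$. Adding the linear and nonlinear parts gives the stated identity.

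The main obstacle is not conceptual but the careful bookkeeping of the nonlinear part: one must differentiate the inhomogeneous weight $|x|^{-b}$ correctly and track the two integration-by-parts contributions so that the weighted terms emerge with the precise coefficients, while verifying that all boundary terms (at the origin and at infinity) vanish — this is where $b<\min\{N/2,2\}$, the bound $|\nabla a|\in L^\infty$, and the decay of the smooth approximating solutions are used. The linear part, by contrast, is entirely standard.
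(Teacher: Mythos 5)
Your computation is correct: the cancellation of the two $g\bar u\,\nabla u$ terms, the identity $|u|^2\nabla g = -b|x|^{-b-2}|u|^{p+1}x + \tfrac{p-1}{p+1}|x|^{-b}\nabla\left(|u|^{p+1}\right)$, the integration by parts producing the coefficients $\tfrac{4}{p+1}-2$ and $-\tfrac{4b}{p+1}$, and the standard treatment of the linear part all check out, and your remarks on regularization and the vanishing of boundary terms under $b<\min\{N/2,2\}$ address the only real technical points. Note that the paper does not prove this lemma at all — it quotes it from \cite{MD_New} and \cite{FG_scat} — and your argument is essentially the standard direct computation given in those references, so it agrees with (indeed supplies) the intended proof.
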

We now have all the basic tools needed to prove scattering. Let $R \gg 1$   to be determined below. We take $a$ to be a smooth radial function satisfying
$$
a(x) = \begin{cases}|x|^2 & |x|\leq \frac{R}{2}, \\
2R|x|& |x| > R. \\
\end{cases}
$$
In the intermediate region $\frac{R}{2} < |x| \leq R$, we impose that
$$
\partial_ra \geq 0, \,\,\, \partial_r^2a \geq 0, \,\,\,|\partial^\alpha a(x)| \lesssim_{\alpha}R |x|^{-|\alpha|+1} \,\,\, \text{for}\,\,\,|\alpha| \geq 1.
$$
Here, $\partial_r$ denotes the radial derivative, i.e., $\partial_r a = \nabla a \cdot \frac{x}{|x|}$. Note that for $|x| \leq \frac{R}{2}$, we have
$$
a_{ij} = 2 \delta_{ij}, \,\,\, \Delta a = 2N, \,\,\, \Delta \Delta a = 0,
$$
while, for $|x| > R$, we have
$$
a_{ij} = \frac{2R}{|x|}\left[\delta_{ij} - \frac{x_i}{|x|}\frac{x_j}{|x|}\right], \,\,\, \Delta a = \frac{2(N-1)R}{|x|}, \,\,\, |\Delta \Delta a(x)| \lesssim \frac{R}{|x|^3}.
$$

\begin{proof}[Proof of Proposition \ref{virial}]
We follow mostly \cite{Campos_New_2019}, but highlighting the differences (extra terms appearing due to non-radiality and weaker decay) throughout the proof.  Choose $R\geq \bar{R}(\delta, M[u_0], Q, s_c)$ as in Lemma \ref{coercivity_balls}. We define the weight $a$ as above and define $Z(t)$ as in Lemma \ref{Morawetz}. Using Cauchy-Schwarz inequality, and the definition of $Z(t)$, we have
\begin{equation}\label{Mbound}
\sup_{t \in \mathbb{R}} |Z(t)| \lesssim R.
\end{equation}
As in \cite{Campos_New_2019}, we compute
\begin{align}
\frac{d}{dt}Z(t) &=8\left[ \int_{|x|\leq\frac{R}{2}}|\nabla u|^2 + \left(\frac{N-b}{p+1}-\frac{N}{2}\right)\int_{|x|\leq \frac{R}{2}}|x|^{-b}|u|^{p+1}\right]\\
&\quad+\int_{|x| > \frac{R}{2}}\left[\left(\frac{4}{p+1}-2\right)(N-1)\Delta a  -\frac{4b}{p+1}\frac{x\cdot\nabla a }{|x|^2} \right]|x|^{-b}|u|^{p+1}\\
&\quad+4\int_{ |x| > \frac{R}{2}}\partial_r^2a|\partial_r u|^2
+4\int_{ |x| > \frac{R}{2}}\frac{\partial_r a}{|x|}|\slashed{\nabla} u|^2 -\int_{ |x| > \frac{R}{2}}|u|^2 \Delta\Delta a,\\
\end{align}
where we denote 
the angular derivative as $\slashed{\nabla} u = \nabla u - \frac{x\cdot \nabla u}{|x|^2}x$. Note that $\slashed{\nabla} u$ is not necessarily zero, since we are not assuming radiality. Nevertheless, the first two terms in the last line can be dropped, by non-negativity. 

As for the second line, one can bound $\|u\|_{L^{p+1}_x}$ by $E$, using Sobolev, so this term gives us only a decay of $O(1/R^b)$. It is, of course, a weaker decay than that one in \cite{Campos_New_2019} (which used Strauss), but in dimensions $N\geq 3$, it is enough to close the argument . Therefore,

\begin{align}\label{MP}\frac{d}{dt}Z(t)&\geq  8\left[ \int_{|x|\leq\frac{R}{2}}|\nabla u|^2 + \left(\frac{N-b}{p+1}-\frac{N}{2}\right)\int_{|x|\leq \frac{R}{2}}|x|^{-b}|u|^{p+1}\right]\\
&\quad-\frac{cE^\frac{p+1}{2}}{R^b}-\frac{c}{R^2}M[u_0].
\end{align}
Define $\phi^A$ ,  $A>0$, as a smooth cutoff to the set $\{|x| \leq \frac{1}{2}\}$ that vanishes outside the set $\{|x| \leq \frac{1}{2}+\frac{1}{A}\}$, and define $\phi_R^A(x) =\ \phi^A\left(\frac{x}{R}\right)$. In order to use Lemma \ref{coercivity_balls}, we introduce some smoothing in the first term of the last inequality (at the expense of an acceptable error, which decays with a power of R).

\begin{align}\label{MP2}
 \int_{|x|\leq\frac{R}{2}}&|\nabla u|^2 + \left(\frac{N-b}{p+1}-\frac{N}{2}\right)\int_{|x|\leq\frac{R}{2}}|x|^{-b}|u|^{p+1}=\\
= &\left[ \int(\phi^A_R)^2|\nabla u|^2 + \left(\frac{N-b}{p+1}-\frac{N}{2}\right)\int(\phi^A_R)^2|x|^{-b}|u|^{p+1}\right]\\
-&\underbrace{\left[ \int_{\frac{R}{2}<|x|\leq\frac{R}{2}+\frac{R}{A}}(\phi^A_R)^2|\nabla u|^2 + \left(\frac{N-b}{p+1}-\frac{N}{2}\right)\int_{\frac{R}{2}<|x|\leq\frac{R}{2}+\frac{R}{A}}(\phi^A_R)^2|x|^{-b}|u|^{p+1}\right]}_{I_A}\\
=&\left[ \int|\phi^A_R\nabla u|^2 + \left(\frac{N-b}{p+1}-\frac{N}{2}\right)\int|x|^{-b}|\phi^A_Ru|^{p+1}\right]\\
-&I_A-\underbrace{\left(\frac{N}{2}-\frac{N-b}{p+1}\right)\int\left((\phi^A_R)^{p+1}-(\phi^A_R)^{2}\right)|x|^{-b}|u|^{p+1}}_{II_A}.
\end{align}
Using Lemma \ref{lem_comut}, we can write  

\begin{align}\label{MP3}
\int&|\phi^A_R\nabla u|^2 + \left(\frac{N-b}{p+1}-\frac{N}{2}\right)\int|x|^{-b}|\phi^A_Ru|^{p+1}\geq\\
&\int|\nabla (\phi^A_Ru)|^2 + \left(\frac{N-b}{p+1}-\frac{N}{2}\right)\int|x|^{-b}|\phi^A_Ru|^{p+1}-\frac{c}{R^2}M[u_0].
\end{align}

The inequalities \eqref{MP}, \eqref{MP2} and \eqref{MP3} can be rewritten as 
\begin{align}\label{MP4}
\frac{d}{dt}Z(t) &\geq  8 \left[ \int|\nabla (\phi^A_Ru)|^2 + \left(\frac{N-b}{p+1}-\frac{N}{2}\right)\int|x|^{-b}|\phi^A_Ru|^{p+1}\right]\\
&\quad-\frac{cE^\frac{p+1}{2}}{R^b}-\frac{c}{R^2}M[u_0]-8I_A - 8II_A.
\end{align}

By 
Lemma \ref{coercivity_balls}, and recalling that $0 < b < 2$,
we can write \eqref{MP4} as 
\begin{align}
\int|x|^{-b}|\phi^A_Ru(t)|^{p+1} \lesssim \frac{d}{dt}Z(t)+&\frac{1}{R^b}+8I_A +8II_A.
\end{align}

We can now make $A \to +\infty$ to obtain $I_A + II_A \to 0$ by dominated convergence. Hence,

\begin{equation}\label{MP6}
\int_{|x|\leq\frac{R}{2}}|x|^{-b}|u(t)|^{p+1} \lesssim \frac{d}{dt}Z(t)+\frac{1}{R^b}.
\end{equation}

We finish the proof integrating over time, and using \eqref{Mbound}. We have
\begin{align}
\frac{1}{T}\int_0^T \int_{|x|\leq\frac{R}{2}}|x|^{-b}|u(t)|^{p+1} &\lesssim\frac{1}{T}\sup_{t\in[0,T]} |Z(t)| + \frac{1}{R^b}\\
&\lesssim \frac{R}{T} + \frac{1}{R^b}.\end{align}
\end{proof}
We are now able to prove some \textit{energy evacuation}. Note that, unlike in \cite{Campos_New_2019}, and inspired by \cite{Jason2021} we keep the factor $|x|^{-b}$ in the integral, since otherwise it would jeopardize the decay.
\begin{pro}[Energy evacuation]\label{energy_evacuation} Under the hypotheses of Proposition \ref{virial}, there exist a sequence of times $t_n \to +\infty$ and a sequence of radii $R_n \to +\infty$ such that
\begin{equation}\label{eq_energy_evac}
\lim_{n \to +\infty}\int_{|x|\leq R_n}|x|^{-b}|u(t_n)|^{p+1} = 0
\end{equation}
\end{pro}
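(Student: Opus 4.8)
The plan is to deduce the statement directly from the Virial-Morawetz estimate of Proposition \ref{virial} by a pigeonhole argument, coupling the radius $R$ to the time horizon $T$ so that both error terms on the right-hand side vanish in the limit. The genuine analytic content already sits in Proposition \ref{virial}; the energy evacuation is a soft extraction of good times from the time average.

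First, I would fix an increasing sequence $T_n \to +\infty$ (say $T_n = n$) and, for each $n$, set $R_n := \sqrt{T_n}$, which exceeds the threshold $\bar R$ of Lemma \ref{coercivity_balls} once $n$ is large. Applying Proposition \ref{virial} on $[0,T_n]$ with radius $R_n$ gives
$$
\frac{1}{T_n}\int_0^{T_n}\int_{|x|\le R_n}|x|^{-b}|u(x,t)|^{p+1}\,dx\,dt \;\lesssim\; \frac{R_n}{T_n}+\frac{1}{R_n^{b}} \;=\; T_n^{-1/2}+T_n^{-b/2} \;=:\; \epsilon_n,
$$
and the choice $R_n=\sqrt{T_n}$ is precisely what forces both the boundary contribution $R_n/T_n$ and the inhomogeneity contribution $R_n^{-b}$ to $0$, so that $\epsilon_n\to 0$.

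Next, to guarantee that the sampled times escape to infinity rather than merely lying somewhere in $[0,T_n]$, I would restrict the average to the second half $[T_n/2,T_n]$. Since the integrand is nonnegative,
$$
\frac{2}{T_n}\int_{T_n/2}^{T_n}\!\int_{|x|\le R_n}|x|^{-b}|u|^{p+1}\,dx\,dt \;\le\; \frac{2}{T_n}\int_{0}^{T_n}\!\int_{|x|\le R_n}|x|^{-b}|u|^{p+1}\,dx\,dt \;\lesssim\; \epsilon_n,
$$
so the average of $t\mapsto \int_{|x|\le R_n}|x|^{-b}|u(t)|^{p+1}\,dx$ over the interval $[T_n/2,T_n]$, which has length $T_n/2$, is $\lesssim\epsilon_n$. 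By the mean value property for integrals of nonnegative functions there exists $t_n\in[T_n/2,T_n]$ with
$$
\int_{|x|\le R_n}|x|^{-b}|u(t_n)|^{p+1}\,dx \;\lesssim\; \epsilon_n .
$$
Since $t_n\ge T_n/2\to+\infty$ and $R_n\to+\infty$, letting $n\to+\infty$ yields \eqref{eq_energy_evac}.

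The argument is essentially soft once Proposition \ref{virial} is available; the only point requiring care is the coupling $R=R(T)$, which must be chosen so that the linear-in-$R$ boundary term $R/T$ and the slowly-decaying term $R^{-b}$ are simultaneously small. Any power $R=T^{\beta}$ with $0<\beta<1$ works, and this is exactly where the weaker non-radial decay rate $R^{-b}$ (as opposed to the faster decay available in the radial setting) is absorbed without breaking the argument. The mild bookkeeping step of averaging over $[T_n/2,T_n]$ rather than $[0,T_n]$ is what upgrades ``some time below $T_n$'' to ``a sequence of times tending to $+\infty$''.
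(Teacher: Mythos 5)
Your proof is correct and follows essentially the same route as the paper: the paper likewise couples the radius to the time horizon (choosing $R_n = T_n^{1/(1+b)}$, which exactly balances the two error terms, versus your $R_n=\sqrt{T_n}$; as you observe, any $R_n=T_n^{\beta}$ with $0<\beta<1$ works) and then extracts good times via the mean value theorem. Your additional step of averaging only over $[T_n/2,T_n]$ is a welcome refinement, since the paper merely asserts that the mean value theorem produces $t_n\to+\infty$, and the half-interval restriction is precisely what justifies that the sampled times escape to infinity.
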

\begin{proof}
Using Proposition \ref{virial}, choose $T_n \to +\infty$ and $R_n = T_n^{\frac{1}{1+b}}$, so that 
\begin{equation}
\frac{1}{T_n}\int_0^{T_n} \int_{|x|\leq R_n}|x|^{-b}|u(t)|^{p+1}\lesssim \frac{1}{T_n^{\frac{b}{1+b}}}\to 0 \text{ as }n \to +
\infty.
\end{equation}
Therefore, by the Mean Value Theorem, there is a sequence $t_n \to +\infty$ such that \eqref{eq_energy_evac} holds. The proof is complete.
\end{proof}
Using Proposition \ref{energy_evacuation}, we can prove
Theorem \ref{teo1}. We prove only the case $t \to +\infty$, as the case $t \to -\infty$ is entirely analogous.
\begin{proof}[Proof of Theorem \ref{teo1}]Take $t_n \to +\infty$ and $R_n \to +\infty $ as in Proposition \ref{energy_evacuation}. Fix $\epsilon > 0$ and $R>0$ as in Theorem \ref{scattering_criterion}. Choosing $n$ large enough, such that $R_n \geq R$, Hölder's inequality yields
\begin{equation}
\int_{|x|\leq R} |u(x,t_n)|^2 \lesssim R^\frac{2b+N(p-1)}{p+1}\left(\int_{|x|\leq R_n}|x|^{-b}|u(x,t_n)|^{p+1}\right)^\frac{2}{p+1} \to 0 \text{ as } n \to +\infty.
\end{equation}
Therefore, by Theorem \ref{scattering_criterion}, $u$ scatters forward in time.
\end{proof}



\begin{bibdiv}
\begin{biblist}

\bib{AndyScat}{article}{
      author={Arora, Anudeep~Kumar},
       title={Scattering of radial data in the focusing {NLS} and generalized
  {H}artree equations},
        date={2019},
     journal={Discrete \& Continuous Dynamical Systems-A},
      volume={39},
      number={11},
       pages={6643},
}

\bib{Bo99}{book}{
      author={Bourgain, J.},
       title={Global solutions of nonlinear {S}chr\"{o}dinger equations},
      series={American Mathematical Society Colloquium Publications},
   publisher={American Mathematical Society, Providence, RI},
        date={1999},
      volume={46},
        ISBN={0-8218-1919-4},
      review={\MR{1691575}},
}

\bib{Campos_New_2019}{article}{
      author={Campos, Luccas},
       title={Scattering of radial solutions to the inhomogeneous nonlinear
  {S}chr\"{o}dinger equation},
        date={2021},
        ISSN={0362-546X},
     journal={Nonlinear Anal.},
      volume={202},
       pages={1\ndash 17},
}

\bib{CFGM}{article}{
      author={Cardoso, Mykael},
      author={Farah, Luiz~Gustavo},
      author={Guzm{\'a}n, Carlos~M},
      author={Murphy, Jason},
       title={Scattering below the ground state for the intercritical
  non-radial inhomogeneous nls},
        date={2020},
     journal={arXiv preprint arXiv:2007.06165},
}

\bib{cazenave}{book}{
      author={Cazenave, Thierry},
       title={Semilinear {S}chr\"{o}dinger equations},
      series={Courant Lecture Notes in Mathematics},
   publisher={New York University, Courant Institute of Mathematical Sciences,
  New York; American Mathematical Society, Providence, RI},
        date={2003},
      volume={10},
        ISBN={0-8218-3399-5},
      review={\MR{2002047}},
}

\bib{Boa_Dinh}{article}{
      author={Dinh, V.},
       title={{Scattering theory in a weighted {$L^{2}$} space for a class of
  the defocusing inhomogeneous nonlinear Schr{\"o}dinger equation}},
        date={2017},
     journal={arXiv preprint arXiv:1710.01392},
}

\bib{MD_New}{article}{
      author={Dodson, Benjamin},
      author={Murphy, Jason},
       title={A new proof of scattering below the ground state for the 3{D}
  radial focusing cubic {NLS}},
        date={2017},
        ISSN={0002-9939},
     journal={Proc. Amer. Math. Soc.},
      volume={145},
      number={11},
       pages={4859\ndash 4867},
      review={\MR{3692001}},
}

\bib{MD_non_radial}{article}{
      author={Dodson, Benjamin},
      author={Murphy, Jason},
       title={A new proof of scattering below the ground state for the
  non-radial focusing {NLS}},
        date={2018},
        ISSN={1073-2780},
     journal={Math. Res. Lett.},
      volume={25},
      number={6},
       pages={1805\ndash 1825},
      review={\MR{3934845}},
}

\bib{DHR_Scat}{article}{
      author={Duyckaerts, Thomas},
      author={Holmer, Justin},
      author={Roudenko, Svetlana},
       title={Scattering for the non-radial 3{D} cubic nonlinear
  {S}chr\"{o}dinger equation},
        date={2008},
        ISSN={1073-2780},
     journal={Math. Res. Lett.},
      volume={15},
      number={6},
       pages={1233\ndash 1250},
      review={\MR{2470397}},
}

\bib{FG_scat}{article}{
      author={Farah, L.~G.},
      author={Guzm{\'a}n, C.~M.},
       title={Scattering for the radial focusing inhomogeneous {NLS} equation
  in higher dimensions},
        date={2019},
     journal={Bull. Braz. Math. Soc., New Series},
       pages={1\ndash 64},
}

\bib{Farah_well}{article}{
      author={Farah, Luiz~G.},
       title={Global well-posedness and blow-up on the energy space for the
  inhomogeneous nonlinear {S}chr\"{o}dinger equation},
        date={2016},
        ISSN={1424-3199},
     journal={J. Evol. Equ.},
      volume={16},
      number={1},
       pages={193\ndash 208},
      review={\MR{3466218}},
}

\bib{FG_Scat_3d}{article}{
      author={Farah, Luiz~Gustavo},
      author={Guzm{\'a}n, Carlos~M.},
       title={Scattering for the radial 3{D} cubic focusing inhomogeneous
  nonlinear {S}chr\"{o}dinger equation},
        date={2017},
        ISSN={0022-0396},
     journal={J. Differential Equations},
      volume={262},
      number={8},
       pages={4175\ndash 4231},
      review={\MR{3603269}},
}

\bib{Foschi05}{article}{
      author={Foschi, Damiano},
       title={Inhomogeneous {S}trichartz estimates},
        date={2005},
        ISSN={0219-8916},
     journal={J. Hyperbolic Differ. Equ.},
      volume={2},
      number={1},
       pages={1\ndash 24},
      review={\MR{2134950}},
}

\bib{g_8}{article}{
      author={Genoud, Fran\c{c}ois},
      author={Stuart, Charles~A.},
       title={Schr\"{o}dinger equations with a spatially decaying nonlinearity:
  existence and stability of standing waves},
        date={2008},
        ISSN={1078-0947},
     journal={Discrete Contin. Dyn. Syst.},
      volume={21},
      number={1},
       pages={137\ndash 186},
      review={\MR{2379460}},
}

\bib{Gill}{article}{
      author={Gill, Tarsem~Singh},
       title={Optical guiding of laser beam in nonuniform plasma},
        date={2000},
     journal={Pramana},
      volume={55},
      number={5-6},
       pages={835\ndash 842},
}

\bib{Boa}{article}{
      author={Guzm{\'a}n, Carlos~M.},
       title={On well posedness for the inhomogeneous nonlinear
  {S}chr\"{o}dinger equation},
        date={2017},
        ISSN={1468-1218},
     journal={Nonlinear Anal. Real World Appl.},
      volume={37},
       pages={249\ndash 286},
      review={\MR{3648381}},
}

\bib{HR_Scat}{article}{
      author={Holmer, Justin},
      author={Roudenko, Svetlana},
       title={A sharp condition for scattering of the radial 3{D} cubic
  nonlinear {S}chr\"{o}dinger equation},
        date={2008},
        ISSN={0010-3616},
     journal={Comm. Math. Phys.},
      volume={282},
      number={2},
       pages={435\ndash 467},
      review={\MR{2421484}},
}

\bib{Kato94}{incollection}{
      author={Kato, Tosio},
       title={An {$L^{q,r}$}-theory for nonlinear {S}chr\"{o}dinger equations},
        date={1994},
   booktitle={Spectral and scattering theory and applications},
      series={Adv. Stud. Pure Math.},
      volume={23},
   publisher={Math. Soc. Japan, Tokyo},
       pages={223\ndash 238},
      review={\MR{1275405}},
}

\bib{KT98}{article}{
      author={Keel, Markus},
      author={Tao, Terence},
       title={Endpoint {S}trichartz estimates},
        date={1998},
        ISSN={0002-9327},
     journal={Amer. J. Math.},
      volume={120},
      number={5},
       pages={955\ndash 980},
      review={\MR{1646048}},
}

\bib{KM_Glob}{article}{
      author={Kenig, Carlos~E.},
      author={Merle, Frank},
       title={Global well-posedness, scattering and blow-up for the
  energy-critical, focusing, non-linear {S}chr\"{o}dinger equation in the
  radial case},
        date={2006},
        ISSN={0020-9910},
     journal={Invent. Math.},
      volume={166},
      number={3},
       pages={645\ndash 675},
      review={\MR{2257393}},
}

\bib{LiPo15}{book}{
      author={Linares, Felipe},
      author={Ponce, Gustavo},
       title={Introduction to nonlinear dispersive equations},
     edition={Second},
      series={Universitext},
   publisher={Springer, New York},
        date={2015},
        ISBN={978-1-4939-2180-5; 978-1-4939-2181-2},
      review={\MR{3308874}},
}

\bib{Liu}{article}{
      author={Liu, CS},
      author={Tripathi, VK},
       title={Laser guiding in an axially nonuniform plasma channel},
        date={1994},
     journal={Physics of plasmas},
      volume={1},
      number={9},
       pages={3100\ndash 3103},
}

\bib{MMZ}{article}{
      author={Miao, C},
      author={Murphy, J.},
      author={Zheng, J.},
       title={Scattering for the non-radial inhomogeneous {NLS}},
        date={2019},
     journal={arXiv preprint arXiv:1912.01318},
}

\bib{Jason2021}{article}{
      author={Murphy, Jason},
       title={A simple proof of scattering for the intercritical inhomogeneous
  {NLS}},
        date={2021},
     journal={arXiv preprint arXiv:2101.04811},
}

\bib{Tao_Scat}{article}{
      author={Tao, Terence},
       title={On the asymptotic behavior of large radial data for a focusing
  non-linear {S}chr\"{o}dinger equation},
        date={2004},
        ISSN={1548-159X},
     journal={Dyn. Partial Differ. Equ.},
      volume={1},
      number={1},
       pages={1\ndash 48},
      review={\MR{2091393}},
}

\bib{TaoBook}{book}{
      author={Tao, Terence},
       title={Nonlinear dispersive equations: local and global analysis},
   publisher={American Mathematical Soc.},
        date={2006},
      number={106},
}

\end{biblist}
\end{bibdiv}

\end{document}